\documentclass[oneside,11pt]{amsart}
\usepackage{amssymb,latexsym,amsmath,amsthm,enumitem,mathrsfs}
\usepackage[text={360pt,615pt},centering]{geometry}
\usepackage{hyperref}
\usepackage{fancyhdr}
\usepackage{color}
\usepackage{stmaryrd}
\usepackage{mathrsfs}
\usepackage{lineno}
\usepackage{enumitem}
\usepackage{soul}
\usepackage[normalem]{ulem}

 %This makes theorems that are lettered rather than numbered
%Courtesy of Phillip Matchett Wood

%SIDE MARGIN DEFINITION

%\numberwithin{equation}{section}

%This version labels everything according to the section within which it lies
\newtheorem{thm}{Theorem}
\newtheorem*{thm*}{Theorem}
\newtheorem{prop}[thm]{Proposition}
\newtheorem{lemma}[thm]{Lemma}

\theoremstyle{remark}

\newcommand{\Q}{\mathbb{Q}}
\newcommand{\N}{\mathbb{N}}
\newcommand{\R}{\mathbb{R}}

\newcommand{\Z}{\mathbb{Z}}

%\newcommand{\Dbf}{\mathbf{D}}

%SCRIPT

%ALGEBRA

\newcommand{\GL}{\mathrm{GL}}

%FRACTUR

\newcommand{\ep}{\varepsilon}

\newcommand{\maps}{\rightarrow}

\newcommand{\al}{\alpha}
\newcommand{\be}{\beta}

\newcommand{\ga}{\gamma}

\newcommand{\Ga}{\Gamma}

\newcommand{\Dbf}{\mathbf{D}}

\newcommand{\zerobf}{\boldsymbol0}

\newcommand{\beq}{\begin{equation}}
\newcommand{\eeq}{\end{equation}}

%This is used to make indentations before subsections in TOC
\makeatletter
\def\@tocline#1#2#3#4#5#6#7{\relax
  \ifnum #1>\c@tocdepth % then omit
  \else
    \par \addpenalty\@secpenalty\addvspace{#2}%
    \begingroup \hyphenpenalty\@M
    \@ifempty{#4}{%
      \@tempdima\csname r@tocindent\number#1\endcsname\relax
    }{%
      \@tempdima#4\relax
    }%
    \parindent\z@ \leftskip#3\relax \advance\leftskip\@tempdima\relax
    \rightskip\@pnumwidth plus4em \parfillskip-\@pnumwidth
    #5\leavevmode\hskip-\@tempdima
      \ifcase #1
       \or\or \hskip 1em \or \hskip 2em \else \hskip 3em \fi%
      #6\nobreak\relax
    \hfill\hbox to\@pnumwidth{\@tocpagenum{#7}}\par% <---- \dotfill -> \hfill
    \nobreak
    \endgroup
  \fi}
\makeatother

\definecolor{pink}{rgb}{1,.2,.6}
\definecolor{orange}{rgb}{0.7,0.3,0}
\definecolor{blue}{rgb}{.2,.6,.75}
\definecolor{green}{rgb}{.4,.7,.4}
\definecolor{purple}{RGB}{127,0,255}

\begin{document}

\title[On the strict majorant property]{On the strict majorant property\\ in arbitrary dimensions}

\author[Gressman]{P. Gressman}
\address{University of Pennsylvania, 209 South 33rd Street, Philadelphia PA 19104}
\email{gressman@math.upenn.edu}

\author[Guo]{S. Guo}
\address{University of Wisconsin Madison, Madison, WI}
\email{shaomingguo@math.wisc.edu}

\author[Pierce]{L.B. Pierce}
\address{Duke University, 120 Science Drive, Durham NC 27708 }
\email{pierce@math.duke.edu}

\author[Roos]{J. Roos}
\address{University of Massachusetts Lowell, Lowell, MA 01854 
\&  The University of Edinburgh, Edinburgh EH9 3FD
}
\email{jroos.math@gmail.com}

\author[Yung]{P.-L. Yung}
\address{ Australian National University, Canberra, ACT 2601 
\& The Chinese University of Hong Kong, Shatin, Hong Kong}
\email{polam.yung@anu.edu.au, \, plyung@math.cuhk.edu.hk}

	\begin{abstract}
In this work we study $d$-dimensional majorant properties. We prove that a set of frequencies in $\Z^d$ satisfies the strict majorant property on $L^p([0,1]^d)$ for all $p> 0$ if and only if the set is affinely independent. We further construct three types of violations of the strict majorant property. Any set of at least $d+2$ frequencies in $\Z^d$ violates the strict majorant property on $L^p([0,1]^d)$ for an open interval of $p \not\in 2\N$ of length 2. Any infinite set of frequencies in $\Z^d$ violates the strict majorant property on $L^p([0,1]^d)$ for an infinite sequence of open intervals of $p \not\in 2\N$ of length $2$. Finally, given any $p>0$ with $p \not\in 2\N$, we exhibit a set of $d+2$ frequencies on the moment curve in $\R^d$ that violate the strict majorant property on $L^p([0,1]^d).$
	\end{abstract}
	
	\maketitle

 	\section{Introduction}
This paper introduces the systematic study of  majorant properties on $L^p([0,1]^d)$ in arbitrary dimensions $d$,  motivated by a well-known circle of ideas that is nearly 100 years old. In 1935, Hardy and Littlewood \cite{HarLit35} wrote a brief paper on one-dimensional  majorant inequalities of the form 
\beq\label{HLineq}
	\Big\| \sum_{n \in \Ga} a_n e(n \cdot x)  \Big\|_{L^p([0,1])} \leq \Big\| \sum_{n \in \Ga } A_n e(n \cdot x)  \Big\|_{L^p([0,1])}  
	\eeq
where $\Ga \subset \Z$ is a finite set of frequencies. Here as usual, $e(\theta) := e^{2\pi i \theta}$ for $\theta \in \R$.   Given a set of frequencies $\Ga \subset \Z$ and an exponent $p$, if this inequality holds for all choices of coefficients $a_n, A_n$ with $|a_n| \leq A_n$ for each $n \in \Ga$, then we say the \emph{strict majorant property} holds for $\Ga,p$.
For any finite set $\Ga \subset \Z$, the strict majorant property holds for all $p \in 2\N$ by   a simple expansion of the integral, as Hardy and Littlewood point out.  

Does it also hold for all  $p \not\in 2\N$?   
Hardy and Littlewood write: ``This is untrue and, since it is the falsity of (\ref{HLineq}) which first reveals the difficulties of our problem, we prove it at once...'' for $p=3$. The falsity was verified for all $p\ge 1, p \not\in 2\N$ by Boas \cite{MR149175}, where for the case $1\le p<2$ he referred to Zygmund \cite[page 128, Vol. II]{MR0107776}. 
Hardy and Littlewood suggested instead the study of the \emph{majorant property}, the property that there is some constant $C_p$ such that  (\ref{HLineq}) holds for all $\Ga \subset \Z$ if the right-hand side is enlarged by $C_p$. Landmark work of Bachelis \cite{MR320636}, Mockenhaupt and Schlag \cite{MR2488338}, and Green and Ruzsa \cite{MR2103913} dramatically confirmed that the majorant property is violated for every $p>2, p \not\in 2\N$.
Majorant properties and possible violations of these properties continue to inspire interest, also because of their close relationship to the local restriction conjecture for the sphere and the Kakeya conjecture; see \S \ref{sec_lit}.

\subsection{Main results}

In this paper we  study   strict majorant properties in arbitrarily high dimensions.
Let $\Gamma\subset \Z^d$ be a fixed set  of   $d$-tuples of integers. We say that $\Gamma$ satisfies the {\em strict majorant property}  on $L^p([0,1]^d)$  if for all choices of real coefficients $(a_n)_{n\in\Gamma}$, $(A_n)_{n\in\Gamma}$ with $|a_n| \leq A_n$,
\beq\label{smp}
\Big\| \sum_{n\in\Gamma} a_n e(n\cdot x) \Big\|_{L^p([0,1]^d)}\le \Big\| \sum_{n\in\Gamma} A_n e(n\cdot x) \Big\|_{L^p([0,1]^d)}. 
\eeq
For any set $\Ga \subset \Z^d$, this statement is  true for all $p \in 2\N$ (see \S \ref{sec_remarks}).
The main question is: when $p \not\in 2\N$, for which $\Ga$ is it true?

Our first main result characterizes the sets  $\Ga \subset \Z^d$  for which the strict majorant property holds for all $p>0$.   We recall that  a set $\Ga \subset \Z^d$ is affinely independent if  for any $n_0 \in \Ga$, $\{n-n_0 \in \Z^d \colon n \in \Ga, n \ne n_0\}$ is linearly independent.

\begin{thm}\label{thm_d2}
Fix an integer  $d \geq 1$. 
A non-empty set $\Ga \subset \Z^d$ satisfies the strict majorant property on $L^p([0,1]^d)$ for all $p>0$ if and only if $\Ga$ is affinely independent.
Furthermore, whenever $\Gamma$ is not affinely independent, then there exists an integer $m \ge 0$, and real coefficients $(a_n)_{n \in \Gamma}$, such that for every $p \in (2m,2m+2)$, 
\beq \label{eq:thm1.1}
\left\| \sum_{n \in \Gamma} |a_n| e(n \cdot x) \right\|_{L^p([0,1]^d)} < \left\| \sum_{n \in \Gamma} a_n e(n \cdot x) \right\|_{L^p([0,1]^d)}. 
\eeq
In particular, this holds for every set $\Gamma \subset \Z^d$ of cardinality at least $d + 2$.
\end{thm}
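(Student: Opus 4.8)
The plan is to handle the two implications by completely different methods: the direction ``affinely independent $\Rightarrow$ strict majorant property for all $p>0$'' is soft and geometric, while the reverse direction, which includes the ``furthermore'' and ``in particular'' clauses, is where all the work sits.

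\textbf{Forward direction.} Write $\Gamma=\{n_0,n_0+m_1,\dots,n_0+m_k\}$ with $m_1,\dots,m_k\in\Z^d$ linearly independent (so $k\le d$), and set $m_0:=0$. Since $|e(n_0\cdot x)|=1$, one reduces to comparing $\|\sum_j a_j e(m_j\cdot x)\|_p$ with $\|\sum_j A_j e(m_j\cdot x)\|_p$ on $[0,1]^d$; and since $m_1,\dots,m_k$ are linearly independent, the map $x\mapsto(m_1\cdot x,\dots,m_k\cdot x)$ pushes Lebesgue measure on $\T^d$ forward to Lebesgue measure on $\T^k$ (check this on characters), so everything reduces to the standard simplex in $\Z^k$. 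There I would introduce, assuming all $A_j>0$,
\[
 H(z):=\int_{[0,1]^k}\Bigl|\sum_{j=0}^{k}A_j z_j\,e(y_j)\Bigr|^{p}\,dy,\qquad e(y_0):=1,\qquad z\in\C^{k+1},
\]
and observe that $H$ is continuous and plurisubharmonic (an average of the plurisubharmonic functions $z\mapsto|L(z)|^p$, $L$ complex linear, $p>0$), and is separately invariant under $z_j\mapsto e^{i\theta_j}z_j$ — absorb the phase by translating the $y$-variables. A separately radial plurisubharmonic function has the form $H(z)=h(|z_0|,\dots,|z_k|)$ with $h$ nondecreasing in each slot, because the circular average of a subharmonic function is nondecreasing in the radius. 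Hence $\|g_a\|_p^{p}=H(a_0/A_0,\dots,a_k/A_k)=h(|a_0|/A_0,\dots,|a_k|/A_k)\le h(1,\dots,1)=\|g_A\|_p^{p}$, which is the strict majorant property.

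\textbf{Reverse direction: set-up.} Put $g_a=\sum_{n\in\Gamma}a_ne(n\cdot x)$, $g_{|a|}=\sum_{n\in\Gamma}|a_n|e(n\cdot x)$ and $D(p):=\|g_a\|_{L^p}^{p}-\|g_{|a|}\|_{L^p}^{p}$. Expanding $\int|g|^{2\ell}$ into its monomials in the Fourier coefficients, with the same nonnegative structure constants in both cases, shows $D(2\ell)\le0$ for every $\ell\ge1$ and $D(0)=0$; so the task is to exhibit real $(a_n)$ with $D>0$ on an entire interval $(2m,2m+2)$. Since $\Gamma$ is affinely dependent it contains a minimal affinely dependent subset (a circuit) $S$, with a unique integer relation $\sum_{n\in S}\lambda_n n=0$, $\sum_{n\in S}\lambda_n=0$, all $\lambda_n\ne0$; taking $a_n=0$ off $S$, it suffices to work with $S$, and the relation translates the affine dependence into a genuine additive relation among the frequencies. (For the last clause: $|\Gamma|\ge d+2$ means $\Gamma$ has more than $d+1$ points in $\R^d$, hence is automatically affinely dependent.)

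\textbf{Reverse direction: the construction.} The idea is to leave all coefficients but one as free positive parameters and attach the exceptional coefficient $\mp R$ to a carefully chosen frequency $n_0\in S$, so that $g_a=\mp R\,e(n_0\cdot x)+h$, $g_{|a|}=\pm R\,e(n_0\cdot x)+h$, and, factoring out $e(n_0\cdot x)$,
\[
 D(p)=\int_{[0,1]^d}\bigl(|R-v(x)|^{p}-|R+v(x)|^{p}\bigr)\,dx,\qquad v(x):=e(-n_0\cdot x)\,h(x),
\]
with $v$ a trigonometric polynomial with positive coefficients and no constant term, supported on the shifted frequencies $\{n-n_0\}$ of the remaining points. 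Because $v$ has no constant term, the odd function $\varepsilon\mapsto\int(|1-\varepsilon v|^{p}-|1+\varepsilon v|^{p})$ has vanishing coefficient through order $\varepsilon^{2}$, and its leading ($\varepsilon^{3}$) coefficient is a positive constant times $-\,p(p-2)\bigl(3p\,P+(p-4)\,Q\bigr)$, where $P,Q\ge0$ count (weighted) solutions of $m_j+m_k=m_l$ and $m_j+m_k+m_l=0$ respectively among the frequencies of $v$. The combinatorics of a circuit allows one to pick $n_0$ (and the support of $h$) so that exactly one of $P,Q$ is positive: a parallelogram/midpoint-type relation, perturbed at a ``corner'', gives $Q=0<P$, so the leading coefficient has the sign of $-(p-2)$ and $D>0$ on $(0,2)$, i.e.\ $m=0$; a centroid-type relation, perturbed at the centroid, gives $P=0<Q$, so the sign is that of $-(p-2)(p-4)$ and $D>0$ on $(2,4)$, i.e.\ $m=1$. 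In both cases the open interval has length $2$ and avoids $2\N$, as required.

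\textbf{The main obstacle.} The difficulty is that the asymptotics above, for a fixed large $R$, only control $D(p)$ for $p$ bounded away from the endpoints of the interval (the $\varepsilon^3$-coefficient degenerates there), whereas the theorem demands one choice of $(a_n)$ valid simultaneously for \emph{all} $p$ in the open interval; mere non-equality at a single $p$ is classical, but full-interval strictness is the crux. To remove the perturbation I would look for an exact comparison. In the $P$-type case I expect $|g_a|^{2}$ to be a nontrivial convex combination of translates of $|g_{|a|}|^{2}$ — equivalently a doubly stochastic image of $|g_{|a|}|^{2}$ with the same mean — whence Jensen's inequality applied to the concave functions $y\mapsto y^{p/2}$, $0<p<2$, yields $D(p)>0$ on all of $(0,2)$ at once (this is exactly what happens for $\Gamma=\{0,1,2\}$ with $g_a=1+t\,e(x)-e(2x)$, $0<t<2$). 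In the $Q$-type case I would seek the analogous structural mechanism forcing $D>0$ throughout $(2,4)$, or, if no clean identity is available, reduce the relevant one-variable integrals to explicit special functions and compare, or else prove a uniform lower bound on the $\varepsilon^{3}$-term together with uniform remainder control on compact sub-intervals and a separate soft argument near the two bounding even integers. Carrying out this full-interval analysis is the heart of the proof of the ``furthermore'' statement.
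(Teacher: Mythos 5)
Your forward direction is correct and genuinely different from the paper's: after the same reduction to frequencies $\{\mathbf{0},e_1,\dots,e_k\}$ (the paper does this via Lemma \ref{lemma_change_var}, you via a pushforward-of-measure check on characters), the paper proves monotonicity in each $|a_j|$ by symmetry plus the explicit one-variable lemma that $r\mapsto\int_0^1|1+re(t)|^p\,dt$ is nondecreasing ($G'(r)\ge 0$ for $r\ge 1$, nonnegative Taylor coefficients for $r<1$), whereas your plurisubharmonicity/circular-mean argument gives the same monotonicity more softly. I have no objection to that half.

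The reverse direction, however, has a genuine gap beyond the one you flag. Your construction truncates at the $\varepsilon^3$ term, whose coefficient is built solely from three-term relations $m_j+m_k=m_l$ or $m_j+m_k+m_l=0$ among the shifted frequencies of the circuit. For a general affinely dependent $\Gamma$ no such relations exist for \emph{any} choice of $n_0$: take $\Gamma=\{\mathbf{0},e_1,\dots,e_d,N(e_1+\cdots+e_d)\}$ with $N$ large, a set of cardinality $d+2$ (so inside the ``in particular'' clause) and itself a circuit whose unique affine relation has coefficients of size about $dN$; a direct check shows that for every $n_0\in\Gamma$ one has $P=Q=0$, so your leading term vanishes identically and no ``corner''/``centroid'' choice is available. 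The first nonvanishing odd-order contribution only appears at the order dictated by the primitive relation, which is exactly how the paper proceeds: it expands around the integer vector $c$ of \eqref{c_dfn} spanning the kernel of $(n_0,\dots,n_d)$, shows every off-diagonal term is divisible by $a^{c_++c_-}$, and the unique lowest term carries the factor $-\binom{p/2}{|c_-|}\binom{p/2}{|c_+|}$, which (since $|c_+|\ne|c_-|$ and $\max(|c_+|,|c_-|)\ge 2$) is positive for all $p$ in the single interval \eqref{p_choice}; this is also why Theorem \ref{thm_d2} asserts only \emph{some} $m\ge 0$ depending on $\Gamma$, not $m\in\{0,1\}$ as your dichotomy would force. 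Separately, even when cubic relations do exist, your claim that one can always arrange exactly one of $P,Q$ positive is asserted rather than proved, and you yourself concede that the crux---a single coefficient vector valid for every $p$ in the whole open interval---is not established (the Jensen/doubly-stochastic identity is verified only for $\{0,1,2\}$). In the paper this issue does not arise: once $c$ is fixed, the main term in Proposition \ref{prop_summary} has the sign \eqref{pc} throughout the open interval, the remainder consists of higher monomials divisible by $a^{c_++c_-}$, and a single small $a$ with one negative entry at an odd-$c_i$ slot works for all such $p$.
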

 
If $\Ga \subset \Z^d$ is an infinite set, we  construct counterexamples to the strict majorant property for arbitrarily large $p$ lying in open intervals of length 2.  
 \begin{thm}\label{thm_infinite}
 Fix an integer  $d \geq 1$. If $\Gamma \subset \Z^d$ is infinite, then for infinitely many positive integers $m$, there exist real coefficients $(a_n)_{n\in\Gamma}$ such that for every $p\in (2m, 2m+2)$,
\[
\left\| \sum_{n \in \Gamma} |a_n| e(n \cdot x) \right\|_{L^p([0,1]^d)} < \left\| \sum_{n \in \Gamma} a_n e(n \cdot x) \right\|_{L^p([0,1]^d)}. 
\]
 \end{thm}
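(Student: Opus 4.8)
The plan is to run the perturbative mechanism behind Theorem~\ref{thm_d2} on a finite subset of $\Gamma$ chosen so that the resulting violation is pushed to an arbitrarily large exponent. Since putting $a_n=0$ for $n$ outside a finite set $\Gamma'\subseteq\Gamma$ changes neither side of the inequality, it suffices to produce, for every integer $k\ge1$, a finite $\Gamma'\subseteq\Gamma$, real coefficients on $\Gamma'$, and an integer $m\ge k$ for which the strict majorant inequality fails throughout $(2m,2m+2)$; the set of admissible $m$ is then unbounded, which is the assertion.

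Call $\Gamma'$ a \emph{$B_k$-set} if the sums $n_1+\dots+n_k$ with $n_i\in\Gamma'$ are pairwise distinct as unordered $k$-tuples. After replacing $d$ by the least dimension admitting an infinite subset of $\Gamma$ inside a $d$-dimensional affine subspace — and identifying the integer points of that subspace with $\Z^{d}$, which leaves the relevant $L^p$ norms unchanged — we may assume every infinite subset of $\Gamma$ affinely spans $\R^d$. Then for each $k$, $\Gamma$ contains an affinely spanning $B_k$-set $\Gamma'$ of cardinality $d+2$: build it greedily, using that for an affinely independent $B_k$-set $\{x_1,\dots,x_j\}$ with $j\le d$, the $x$ for which $\{x_1,\dots,x_j,x\}$ is not a $B_k$-set form a finite set (a nontrivial $k$-fold collision involving $x$ either pins $x$ down to finitely many values or, if $x$ occurs with equal multiplicity on the two sides, reduces to a collision among $x_1,\dots,x_j$, which is impossible), while the affine hull of $x_1,\dots,x_j$ is a proper subspace and meets $\Gamma$ in a finite set; so one extends to an affinely independent $B_k$-set of size $d+1$ and then appends one further element keeping the $B_k$-property.

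Fix such a $\Gamma'$. Since $\Gamma'$ affinely spans $\R^d$, its lattice of balanced relations $\{(c_n)_{n\in\Gamma'}:\ \sum_n c_n=0,\ \sum_n c_n n=0\}$ has rank one; let $c^{R}$ be a primitive generator and $s:=\sum_n (c^{R}_n)^{+}$ its size, so that $\Gamma'$ has no nontrivial balanced relation of size $<s$ and, by the $B_k$-property, $s\ge k+1$. Once $k>d/2$, $c^{R}$ cannot have all entries in $\{-1,0,1\}$ (else $|\Gamma'|\ge2s\ge2(k+1)>d+2$), so fix $n_0$ with $q:=|c^{R}_{n_0}|\ge2$; and since $\sum_n c^{R}_n=0$ with $c^{R}$ primitive, some $n_1\ne n_0$ has $c^{R}_{n_1}$ odd. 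Put $a_{n_0}=c$ for a large $c>0$, $a_{n_1}=-1$, $a_n=1$ for the remaining $n\in\Gamma'$, $a_n=0$ otherwise, and let $f=\sum_n a_n e(n\cdot x)$, $g=\sum_n|a_n|e(n\cdot x)$. Writing $|f|^p=c^p|1+\widetilde r/c|^p$ with $\widetilde r=\sum_{n\ne n_0}a_n e((n-n_0)\cdot x)$ fixed, expanding $|1+z|^p=\sum_{a,b\ge0}\binom{p/2}{a}\binom{p/2}{b}z^a\bar z^b$, and integrating (valid once $c>\|\widetilde r\|_\infty$),
\[
\int_{[0,1]^d}\!|f|^p-\int_{[0,1]^d}\!|g|^p=c^p\sum_{a,b\ge0}\binom{p/2}{a}\binom{p/2}{b}c^{-(a+b)}\Big(\textstyle\int\widetilde r^{\,a}\,\overline{\widetilde r}^{\,b}-\int\widetilde s^{\,a}\,\overline{\widetilde s}^{\,b}\Big),
\]
with $\widetilde s$ the analogue of $\widetilde r$ using $|a_n|$ in place of $a_n$. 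A summand is nonzero only if $\Gamma'$ admits a nontrivial balanced relation of size $\max(a,b)$ in which $n_0$ occurs $|a-b|$ times, all on one side; by rank-one-ness and the absence of such relations of size $<s$, this relation is $\pm c^{R}$, which (as $n_0$ has multiplicity $q$ in $c^{R}$) is realizable only for $(a,b)\in\{(s,s-q),(s-q,s)\}$. Hence the leading term as $c\to\infty$ sits at order $c^{-(2s-q)}$ with coefficient $2\binom{p/2}{s}\binom{p/2}{s-q}\kappa$, where $\kappa:=\int\widetilde r^{\,s}\,\overline{\widetilde r}^{\,s-q}-\int\widetilde s^{\,s}\,\overline{\widetilde s}^{\,s-q}$ is real and, by the sign choice ($n_1$ the only negative coefficient, $|c^{R}_{n_1}|$ odd), strictly negative. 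On $(2s-4,2s-2)$ one has $\binom{p/2}{s}<0$ and $\binom{p/2}{s-q}>0$ (this uses $q\ge2$), so that coefficient is positive; therefore for $c$ large $\|f\|_{L^p([0,1]^d)}>\|g\|_{L^p([0,1]^d)}$ throughout $(2m,2m+2)$ with $m:=s-2\ge k-1$, which completes the reduction.

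The step needing care is to pass from compact subintervals to the \emph{full} open interval $(2s-4,2s-2)$: its endpoints are even integers at which $\|f\|_{L^p}=\|g\|_{L^p}$ (since $s-2<s-1<s$), and there the polynomial $\binom{p/2}{s}\binom{p/2}{s-q}$ degenerates, so fixing $c$ and invoking continuity does not suffice. One uses that $\|f\|_{L^{2\ell}}=\|g\|_{L^{2\ell}}$ for every $\ell\le s-1$ forces each coefficient of the expansion above to vanish at $p=2\ell$; factoring $\prod_{\ell=1}^{s-1}(p-2\ell)$ out of the difference and examining the quotient near each endpoint yields the correct sign there, which together with the compact-interval estimate covers all of $(2s-4,2s-2)$. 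A secondary technical point is the dimension reduction of the second paragraph, needed so that the chosen $B_k$-set genuinely has a rank-one relation lattice; granting it, the construction and estimates go through as described.
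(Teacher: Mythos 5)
Your proof is correct in substance, but it reaches Theorem~\ref{thm_infinite} by a genuinely different route than the paper. The paper first reduces to full affine dimension (Proposition~\ref{prop_affine_inv} plus the change-of-variables Lemma~\ref{lem_affine_invar}), observes via Proposition~\ref{prop_affine_abundant} that the reduced set is affinely abundant, and then fixes a $d$-tuple $n_1,\dots,n_d$ while letting a single extra point $n_0$ range over $\Gamma$ so that $\det(\widetilde{n_0},\dots,\widetilde{n_d})$ is unbounded; the key trick making the relation vector $c^{(n_0)}$ large is that one cofactor ($v_0$) is independent of $n_0$, so the normalizing gcd stays bounded. The violation is then produced by the small-coefficient expansion of Proposition~\ref{prop_summary} and sign criterion (i), with $|c_+|\ne|c_-|$ coming from the nonvanishing determinant. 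You instead force the relation to be large combinatorially: after reducing to minimal affine dimension (your identification of the affine sublattice with $\Z^{d'}$ is exactly the paper's Lemma~\ref{lem_affine_invar}, so this is available), you greedily extract a $(d+2)$-point affinely spanning $B_k$-subset, whose rank-one relation lattice has primitive generator of size $s\ge k+1$, and you expand around a single large pivot coefficient at a frequency where the relation has multiplicity $q\ge 2$, flipping the sign at an odd-multiplicity frequency; the leading cross term carries $\binom{p/2}{s}\binom{p/2}{s-q}$, negative on $(2s-4,2s-2)$. Your checks (finiteness of the bad set in the greedy step, $s\ge k+1$, existence of $q\ge2$ and of an odd entry away from the pivot, identification of the minimal-order pair $(a,b)\in\{(s,s-q),(s-q,s)\}$, and $\kappa<0$) all go through. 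What the paper's approach buys is that it never needs a Sidon-type subset or the hereditary spanning reduction -- the determinant growth does the work; what yours buys is a self-contained finite configuration per $k$ and, notably, an explicit treatment of the uniformity of the inequality over the whole open interval near the even endpoints, a point the paper's write-up passes over (its choice of $a$ is made after $p$): your fix -- every nonzero term contains a factor $\binom{p/2}{j}$ with $j\ge s$, so one can factor out $\prod_{\ell=1}^{s-1}(p-2\ell)$ and check that the quotient has a fixed sign on the closed interval, the choice $q\ge2$ keeping $\binom{p/2}{s-q}$ bounded below there -- is only sketched but is sound and can be made rigorous exactly as you indicate.
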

 The length of these intervals of $p$ is tight, since 
 the strict majorant property holds for all $p \in 2\N$.

 Third, we prove violations of the strict majorant property for a nice geometric example: the moment curve.  This  relates to recent work of Bennett and Bez \cite{MR3050801}, who  introduced the study of the strict majorant property  for frequencies on the parabola, motivated by connections to the Schr\"odinger equation on the torus and discrete restriction, and also relations to \cite{BBC09}. They proved  that for $\Ga \subset \{ (n,n^2): n \in \Z\}  \subset \Z^2$, for every $p>2$, $p \not\in 2\N$, the strict majorant property fails. 
We generalize this to any dimension: for every  $p \not\in 2\N$, we  exhibit   $d+1$   integral points $n_0, \ldots, n_d$ on the moment curve  in $\R^d$ such that $\Ga = \{{\bf 0}, n_0, \ldots, n_d\}$ fails the strict  majorant property.
 
\begin{thm}\label{thm_moment_curve}
Fix an integer $d \geq 1$. Let $\gamma( t ) = ( t , t^2 ,\dots, t^d)$ parametrize the moment curve in $\R^d$. For any $p > 0$ with $p \notin 2\N$, there exists $k \in \N$ and $a_0, \dots, a_d \in \R$ such that
\[
\Big\| 1 + \sum_{i=0}^d |a_i| e(\gamma(k+i) \cdot x) \Big\|_{L^p([0,1]^d)} < \Big\| 1 + \sum_{i=0}^d a_i e(\gamma(k+i) \cdot x) \Big\|_{L^p([0,1]^d)}.
\]
\end{thm}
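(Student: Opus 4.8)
The plan is to reduce Theorem~\ref{thm_moment_curve} to a multivariate power series computation plus one arithmetic lemma about the moment curve. Fix $p>0$ with $p\notin2\N$ and set $m:=\lfloor p/2\rfloor\ge0$, so $p/2\in(m,m+1)$. For an integer $k\ge1$ write $e_i(x):=e(\gamma(k+i)\cdot x)$; for $\mathbf a=(a_0,\dots,a_d)\in\R^{d+1}$ with $\sum_i|a_i|<1$ and $G:=\sum_i a_i e_i$ we expand $|1+G|^p=(1+G)^{p/2}\,\overline{(1+G)^{p/2}}$ into binomial series, multiply out, and integrate over $[0,1]^d$ term by term (using that $\int_{[0,1]^d}e(\xi\cdot x)\,dx$ equals $1$ if $\xi=\mathbf 0$ and $0$ otherwise). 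This yields the absolutely convergent expansion
\[
\Phi_k(\mathbf a):=\Big\|1+\sum_{i=0}^d a_ie_i\Big\|_{L^p([0,1]^d)}^{p}=\sum_{\mu\in(\Z_{\ge 0})^{d+1}}C_\mu(p,k)\,\mathbf a^\mu,
\qquad
C_\mu(p,k)=\sum_{\substack{\alpha+\beta=\mu\\ \sum_i(\alpha_i-\beta_i)\gamma(k+i)=\mathbf 0}}\binom{p/2}{|\alpha|}\binom{p/2}{|\beta|}\binom{|\alpha|}{\alpha}\binom{|\beta|}{\beta}.
\]
The role of the hypothesis $p\notin2\N$ is exactly that each factor $\binom{p/2}{r}$ is nonzero, with $\mathrm{sgn}\binom{p/2}{r}=(-1)^{\max(0,\,r-1-m)}$, so the sign of a term flips whenever $r$ crosses $m+1$.

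Next, determine which monomials survive. The frequency constraint forces $\alpha-\beta$ into the rank-one lattice $\Lambda_k:=\{n\in\Z^{d+1}:\sum_in_i\gamma(k+i)=\mathbf 0\}$ of linear relations among $\gamma(k),\dots,\gamma(k+d)$ (rank one because these $d+1$ points span $\R^d$ by Vandermonde non-degeneracy, and any generator has nonzero coordinate sum since the points are affinely independent). Explicitly $\Lambda_k=\Z\cdot n^*(k)$ with $n^*(k)=\big((-1)^iM_i(k)\big)_{i=0}^d\big/\gcd_iM_i(k)$, where $M_i(k)=\big(\prod_{l\ne i}(k+l)\big)\big(\prod_{0\le l<l'\le d,\,l,l'\ne i}(l'-l)\big)>0$ is the $(0,i)$-minor of the Vandermonde matrix of $k,\dots,k+d$; thus $n^*$ has full support with alternating signs $\mathrm{sgn}(n^*_i)=(-1)^i$. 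Set $\nu^+:=\sum_{n^*_i>0}n^*_i$ and $\nu^-:=-\sum_{n^*_i<0}n^*_i$, so $\nu^++\nu^-=\|n^*(k)\|_1$ and $\nu^+-\nu^-=\sum_in^*_i\ne0$; also $\nu^\pm(k)\to\infty$ as $k\to\infty$ (the $M_i(k)$ grow like $k^d$ while $\gcd_iM_i(k)$ stays bounded, as noted below). Pick $i_0$ with $n^*_{i_0}$ odd (possible as $n^*$ is primitive), set $a_{i_0}=-\lambda$ and $a_i=+\lambda$ for $i\ne i_0$ with $\lambda>0$ small, and let $\epsilon\in\{\pm1\}^{d+1}$ be the corresponding sign vector (so $\epsilon_{i_0}=-1$ and $\epsilon_i=+1$ otherwise). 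Then
\[
\Phi_k(\mathbf a)-\Phi_k(|\mathbf a|)=-2\sum_{\mu:\,\epsilon^\mu=-1}C_\mu(p,k)\,\lambda^{|\mu|}.
\]
Since $\epsilon^\mu=-1$ forces $\mu_{i_0}$ odd, every $\mu$ in this sum has $\alpha-\beta=tn^*$ with $t$ odd, hence $\mu_i\ge|t|\,|n^*_i|\ge|n^*_i|$ for all $i$; so $|\mu|\ge\|n^*(k)\|_1$, with equality only at $\mu=|n^*(k)|$, for which only $t=\pm1$ occur (no coordinate of $n^*$ vanishes), yielding $C_{|n^*(k)|}(p,k)=2\binom{p/2}{\nu^+}\binom{p/2}{\nu^-}\cdot(\text{positive multinomial factors})$.

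Now take $k$ so large that $\nu^\pm(k)\ge m+2$. Then $\mathrm{sgn}\,C_{|n^*(k)|}(p,k)=(-1)^{(\nu^+-1-m)+(\nu^--1-m)}=(-1)^{\nu^++\nu^-}=(-1)^{\|n^*(k)\|_1}$, and since the $\mu=|n^*(k)|$ term dominates for small $\lambda$, the difference $\Phi_k(\mathbf a)-\Phi_k(|\mathbf a|)$ is positive precisely when $\|n^*(k)\|_1$ is odd — which is the claimed strict inequality. Everything thus reduces to the lemma: \emph{$\|n^*(k)\|_1$ is odd for infinitely many $k$} (hence for arbitrarily large $k$). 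Since $\|n^*(k)\|_1=\nu^++\nu^-\equiv\nu^+-\nu^-=\sum_in^*_i(k)\pmod2$, and expanding the Vandermonde determinant of $k,\dots,k+d$ along its all-ones top row gives $\sum_i(-1)^iM_i(k)=\prod_{0\le l<l'\le d}(l'-l)=:G_d$, which is independent of $k$, we have $\sum_in^*_i(k)=G_d/g(k)$ with $g(k):=\gcd_iM_i(k)$ dividing $G_d$; hence the lemma is equivalent to $v_2(g(k))=v_2(G_d)$ for infinitely many $k$. This I would prove by taking $k$ divisible by $2^N$ where $2^N>d$ and $N>v_2(G_d)$: then $v_2(k+l)=v_2(l)$ for $1\le l\le d$, so $M_0(k)=\big(\prod_{1\le l<l'\le d}(l'-l)\big)\prod_{l=1}^d(k+l)$ has the same $2$-adic valuation as $\big(\prod_{1\le l<l'\le d}(l'-l)\big)\prod_{l=1}^d l=G_d$, whereas for $i\ge1$ the factor $k$ divides $M_i(k)$, forcing $v_2(M_i(k))\ge N>v_2(G_d)$; therefore $v_2(g(k))=v_2(M_0(k))=v_2(G_d)$.

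The main obstacle is precisely this arithmetic lemma about the relation vector on the moment curve; the remaining steps are bookkeeping. The one routine point glossed over above is that the $\mu=|n^*(k)|$ term genuinely dominates: for fixed $k$ the series $\sum_\mu C_\mu(p,k)\mathbf a^\mu$ converges absolutely for $\|\mathbf a\|_1<1$ (the number of $\mu$ of a given total degree is polynomial in the degree, and $|\binom{p/2}{r}|$ grows at most factorially in $r$), so the tail $\sum_{\mu\ne|n^*(k)|,\,\epsilon^\mu=-1}C_\mu(p,k)\lambda^{|\mu|}$ is $O\big(\lambda^{\|n^*(k)\|_1+1}\big)$ as $\lambda\to0^+$.
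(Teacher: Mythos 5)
Your proposal is correct, and its analytic skeleton coincides with the paper's: expand $\|1+\sum a_i e(\gamma(k+i)\cdot x)\|_{L^p}^p$ as a power series, observe that off-diagonal terms are governed by the rank-one lattice of relations among $\gamma(k),\dots,\gamma(k+d)$ with primitive generator given by signed Vandermonde minors, isolate the lowest-order term $\mu=|n^*(k)|$, and win by the sign criterion ``both $\nu^+,\nu^->p/2$ and $\nu^++\nu^-$ odd'' (this is exactly the paper's Proposition~\ref{prop_summary} together with criterion (ii)). Where you genuinely diverge is in the key arithmetic input. The paper's Lemma~\ref{201212claim3_1} evaluates each minor in closed form, showing that $d!(d-1)!\cdots 1!$ divides every coordinate $v_i$, that the normalized vector $c=v/(d!(d-1)!\cdots 1!)$ is the primitive generator with $c_0+\cdots+c_d=1$ (so $|c|$ is odd for \emph{every} $k\ge 1$), and that $|c_i|\ge k^d/(d!)^2$; in particular $\gcd(v_0,\dots,v_d)$ equals $d!(d-1)!\cdots 1!$ exactly. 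You instead avoid computing the gcd: you only use $g(k)\mid G_d$ (from $\sum_i(-1)^iM_i(k)=G_d$) to get $|n^*_i(k)|\to\infty$, and you force the parity $\|n^*(k)\|_1$ odd only along the subsequence $2^N\mid k$, $2^N>d$, $N>v_2(G_d)$, via the $2$-adic valuation comparison $v_2(M_0(k))=v_2(G_d)<v_2(M_i(k))$ for $i\ge1$. That argument is sound (I checked the valuation computation, the reduction of the parity of $\|n^*\|_1$ to that of $\sum_i n^*_i=G_d/g(k)$, the restriction to odd multiples $t$ of $n^*$ forced by the sign pattern $\epsilon$, and the domination of the tail for small $\lambda$), and it is cheaper than the paper's explicit minor computation; the trade-off is that the paper's route works for every $k$, identifies the generator and the constant exactly, and yields the quantitative lower bound on $|c_i|$, whereas yours establishes only what the existence statement needs.
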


Nevertheless, we observe in \S \ref{sec_remarks} that a weaker majorant property does hold: for all choices of real coefficients $a_n,A_n$ with $|a_n| \leq A_n$ for all $n$,
\beq\label{weak_constant} \Big\| \sum_{n \in \Z} a_ne(\ga(n) \cdot x) \Big\|_{L^p([0,1]^d)} \leq (d!)^{1/2d}\Big\| \sum_{n \in \Z} A_ne(\ga(n) \cdot x) \Big\|_{L^p([0,1]^d)} \eeq 
for \emph{all} $2 \leq p \leq 2d$.
 This generalizes the observation in \cite[Thm. 1.2]{MR3050801}.

\subsection{Relation to open problems}\label{sec_lit}

The connection of (1-dimensional) majorant inequalities with the local restriction conjecture for the sphere and  the Kakeya conjecture  arises via a quantitative study of how big a correction factor $B_p(\Ga)$ is needed to make
\beq\label{MS_obs}
\Big\| \sum_{n \in \Ga} a_n e(n  x)  \Big\|_{L^p([0,1])} \leq B_p(\Ga) \Big\| \sum_{n \in \Ga}  e(n  x)  \Big\|_{L^p([0,1])} \eeq hold for \emph{all} choices of 
 coefficients $|a_i| \leq 1$. 
 Mockenhaupt and Schlag \cite[Thm. 3.2]{MR2488338}, and independently Green and Ruzsa \cite{MR2103913} proved that   for every $p>2, p \not\in 2\N,$ for every sufficiently large $N$, there exists   $\al_p>0$,  and a choice of frequency set $\Ga \subset \{1,2,\ldots,N\}$ that requires $B_p(\Ga) \gg N^{\al_p}$.
  Yet if it can be shown that $B_p(\Ga) \ll_{p,\ep}N^\ep$ for all $\ep>0$ for a \emph{particular} $\Ga \subset \{1,2,\ldots,N\}$ relevant to the local restriction conjecture, this will imply the local restriction conjecture and hence the Kakeya conjecture (see \cite{MR2103913},\cite{Mockenhaupt}). 
 Indeed \cite[Thm. 4.4]{MR2488338} does show that random subsets of $\{1,2,\ldots, N\}$ do have this property with a high probability. It is naturally of interest to exhibit specific sets $\Gamma \subset\{1,2,\ldots,N\}$ for which $B_p(\Gamma) \ll_{p,\ep} N^\ep$ (or even smaller). 
 
 In this direction,  for any set of frequencies $\Gamma \subset \{1,2, \ldots, N\}$, for all $p \geq 2$,
(\ref{MS_obs}) holds for all coefficients $|a_i| \leq 1$ with 
\beq\label{MS}
B_p(\Gamma) \ll (N /|\Ga|)^{1/p}.
\eeq
 This is noted in the first display equation of \cite[\S 2, p. 1191]{MR2488338}, and follows from comparing  a consequence of the Hausdorff-Young inequality, 
\[ \Big\| \sum_{n \in \Ga} a_n e(nx)\Big\|_{L^p([0,1])} \leq \Big\|\{a_n\}\Big\|_{\ell^{p'}} \leq |\Ga|^{1/p'} = |\Ga|^{1-1/p}, \]
to the lower bound
\[ \Big\| \sum_{n \in \Ga}  e(nx)\Big\|_{L^p([0,1])} \geq \left( \int_{|x| \ll 1/N} \Big(\sum_{n\in \Ga} \Re(e(nx))\Big)^p dx \right)^{1/p} \gg_p |\Ga|N^{-1/p}.
\]
In particular, if   $|\Ga| \gg N$ (for example an arithmetic progression), then $B_p(\Ga)\ll 1$. Thus the remaining interesting cases to investigate $B_p(\Ga)$ have $|\Ga| = O(N^\rho)$ for some $\rho<1$. In such cases, many immediate corollaries follow from existing results of an arithmetic flavor.
For example, if $\Ga = \mathbb{P}_N$,  the set of prime numbers in the interval $[1,N]$, one sees that $B_p(\mathbb{P}_N) \ll (\log N)^{1/p}$ for all $p \geq 2$. Or, if $Q$ is a fixed (positive definite) binary quadratic form with (fundamental) discriminant $-D<0$, and $\Ga$ denotes the integers in $[1,N]$ represented by $Q$, then $|\Ga| \gg_D N/\sqrt{\log N}$ (for all $N$ sufficiently large relative to $D$), so that $B_p(\Ga) \ll (\log N)^{1/(2p)}$. (See Landau \cite[Vol. 2 p. 643]{Lan09} for $-D=-4$, and more generally Bernays  \cite{Ber12}.) Or, if $\Ga$ denotes the  integers in $[1,N]$ that can be written as a sum of two powerful numbers, then $B_p(\Ga) \ll (\log N)^{\frac{1}{p}(1-2^{-1/3} + o(1))}$ by \cite{Blo05}; see also \cite{BloGra06}. (A number $m$ is powerful if for each $p|m$, $p^2|m$.)

For a given set $\Ga$, it is then interesting to beat (\ref{MS}). 
In the case that $\Ga = \mathbb{P}_N$,  Green \cite[Theorem 1.5]{MR2180408} improved on this, showing that $B_p(\mathbb{P}_N)\ll 1$.
See also earlier work of Bourgain \cite{MR1029904}.
More recent work of Krause-Mirek-Trojan \cite{MR3494246} exhibits a class of deterministic sets $\Ga \subset [1,N]$ with vanishing Banach density as $N \maps \infty$, for which $B_p(\Ga) \ll_p 1$. This concludes our brief remarks on the well-known 1-dimensional setting. 

In the $d$-dimensional setting we study here,  how big a correction factor $B_p(\Ga)$ is required to make a weaker majorant property hold on $L^p([0,1]^d)$ for a particular set of frequencies $\Ga \subset \Z^d$? For the moment curve, (\ref{weak_constant}) shows that $B_p(\Gamma)\ll_d 1$ suffices   for all $2 \leq p \leq 2d.$ For which sets $\Ga \subset [1,N]^d$ does $B_p(\Ga) \ll N^\ep$ suffice? 
This would have interesting applications in recent work of Demeter and Langowski \cite{DemLan21x} on restriction of exponential sums to hypersurfaces (see \cite[Conj. 1.2, Conj. 1.3, Lem. 2.1]{DemLan21x}).

 \subsection{Notation}
We recall that for a positive real number $p \not\in 2\N$,   the generalized binomial coefficient is defined by
\[ \binom{p/2}{0}=1, \qquad \binom{p/2}{j} = \frac{1}{2^j j!} \prod_{\ell=0}^{j-1}(p-2\ell), \qquad j=1, 2, 3,\ldots.\]
  In particular, $\binom{p/2}{j} $ is positive for $0 \leq j \leq \lceil p/2 \rceil$, negative for $j=\lceil p/2 \rceil + 1$, and then alternates in sign for subsequent values of $j$. Second,  multinomial coefficients are defined for $n \in \N$ and $\be = (\be_0,\be_1,\ldots,\be_d) \in \Z^{d+1}_{\geq 0}$ by 
  \[ \binom{n}{\be} = \binom{n}{\be_0,\be_1,\ldots, \be_d} = \frac{n!}{\be_0! \be_1! \cdots \be_d!}.\]
If $n=0$ or $\be=(0,\ldots,0)$, the  multinomial coefficient is  1.  For $b=(b_0, \dots, b_d)$ and $\be \in \Z_{\geq 0}^{d+1}$, then $b^{\beta  }= b_0^{\beta_0  } \dots b_d^{\beta_d  } $   and $|\be| = \be_0 + \cdots + \be_d$. 

If $\Ga$ denotes a (finite or infinite) set in $\Z^d$ and $n \in \Z^d$ then $\Ga + n$ denotes the set $\{ \ga + n : \ga \in \Ga \} \subseteq \Z^d.$ Similarly, if $A \in \Z^{m \times d}$ is an integral $m \times d$ matrix, then $A \Ga$ denotes the set $\{ A\ga : \ga \in \Ga\} \subseteq \Z^m.$ For $n \in \Z^d$, we will use $\widetilde{n}$ to denote the vector $(1,n) \in \Z^{d+1}$. 

\section{Preliminaries}
In this section we prove Proposition \ref{prop_affine_inv}, which is key to proving Theorems \ref{thm_d2} and  \ref{thm_infinite}.
We first collect certain facts we will need about lattices in $\Z^d$ (see e.g. \cite[Ch. 12]{MR1129886} or \cite[Ch. III \S 7]{MR1878556}).
A lattice in $\Z^d$ is a finitely generated, additive subgroup of $\Z^d$, which we will view as a $\Z$-submodule of $\Z^d$. 
Given a finitely generated $\Z$-module $M$, a subset $S$ of $M$ is said to be linearly independent if for every finite set $S' \subset S$ and every choice of coefficients $\{r_s\}_{s \in S'} \subset \Z$, we have $\sum_{s \in S'} r_s s \ne 0$ unless $r_s = 0$ for every $s \in S'$. A subset $S$ of $M$ generates $M$ if every element of $M$ can be written as $\sum_{s \in S'} r_s s$ for some finite set $S' \subset S$ and some  choice of coefficients $\{r_s\}_{s \in S'} \subset \Z$. A basis of $M$ is a linearly independent subset of $M$ that generates $M$. The module $M$ is free if it is isomorphic to $\Z^{d'}$ for some $d'$; in particular  $M$ is free iff it admits a basis. Every basis of a free module has the same cardinality, called the rank of the module.

Finally, every submodule of a free $\Z$-module is free (because $\Z$ is a principal ideal domain), so in particular every lattice in $\Z^d$ is free and has a well-defined finite rank $\le d$.

 We will reduce Theorems \ref{thm_d2} and \ref{thm_infinite}   to  cases when $\Ga$ has full affine dimension (Theorem \ref{thm_affine_indep}) and $\Ga$ is affinely abundant (Theorem \ref{thm_affine_abun}).
 
\subsection{Affine independence}

For $n \in \Z^d$, recall that $\widetilde{n}$ denotes the vector $(1,n) \in \Z^{d+1}$. 

A set $S \subset \Z^d$ is affinely independent if $\widetilde{S} := \{\widetilde{n} \in \Z^{d+1} \colon n \in S\}$ is a linearly independent set in $\Z^{d+1}$; equivalently, for any $n_0 \in S$, $\{n-n_0 \in \Z^d \colon n \in S, n \ne n_0\}$ is linearly independent.
As an example, a set of $d+1$ vectors $\{n_0, n_1, \ldots, n_d\} \subset \Z^d$ is affinely independent if and only if
\beq\label{affine_indep_cond}
\det(\widetilde{n_0}, \widetilde{n_1},  \ldots, \widetilde{n_d})  = \det (n_1 - n_0, \ldots,n_d -n_0)\ne 0.
\eeq
(We see this by subtracting the first column from each of the other columns in the matrix on the left-hand side, and then expanding the determinant along the top row, which has only one nonzero entry.)

The affine dimension of a non-empty set $\Gamma \subset \Z^d$ is the rank of the lattice generated by $\Gamma - n_0$ for any $n_0 \in \Gamma$.  It coincides with the cardinality $|S|-1$, for any maximal affinely independent subset $S$ of $\Gamma$. In particular, $\Ga$ is affinely independent, iff its cardinality is 1 more than its affine dimension.

\begin{prop} \label{prop_affine_inv}
Let $\Gamma \subset \Z^d$ be a non-empty set with affine dimension $d'$. Then there exist $n_* \in \Gamma$, $A \in \Z^{d \times d'}$ of rank $d'$, and a set $\Gamma' \subset \Z^{d'}$ of affine dimension $d'$ with the same cardinality as $\Ga$ such that $\Gamma = n_* + A \Gamma'$.
\end{prop}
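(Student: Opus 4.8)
The plan is to fix a basepoint in $\Gamma$, translate it to the origin, and then coordinatize the rank-$d'$ lattice generated by the translated set using an integral basis; the matrix $A$ will be the matrix of that basis.

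First I would fix an arbitrary $n_* \in \Gamma$ and set $\Gamma_0 := \Gamma - n_* \subset \Z^d$, a set containing $0$. By the definition of affine dimension, the lattice $L \subset \Z^d$ generated by $\Gamma_0$ has rank $d'$. Since $L$ is a submodule of the free $\Z$-module $\Z^d$, it is itself free by the facts recalled in the preliminaries, and being of rank $d'$ it admits a basis $v_1, \dots, v_{d'} \in \Z^d$. Let $A \in \Z^{d \times d'}$ be the matrix whose columns are $v_1, \dots, v_{d'}$. Because these columns are $\Z$-linearly independent in $\Z^d$, they are $\Q$-linearly independent in $\Q^d$ (clear denominators), so $A$ has rank $d'$, and the $\Z$-linear map $\Z^{d'} \to L$, $\mathbf{m} \mapsto A\mathbf{m}$, is an isomorphism of $\Z$-modules: surjective because the $v_i$ generate $L$, injective because they are independent.

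Next I would pull $\Gamma_0$ back into $\Z^{d'}$ through this isomorphism. Every $\gamma \in \Gamma$ satisfies $\gamma - n_* \in L$, so there is a unique $\gamma' \in \Z^{d'}$ with $\gamma - n_* = A\gamma'$; set $\Gamma' := \{\gamma' : \gamma \in \Gamma\}$. Then $\Gamma = n_* + A\Gamma'$ by construction. The composite $\gamma \mapsto \gamma - n_* \mapsto \gamma'$ is a bijection from $\Gamma$ onto $\Gamma'$ (the first map is a translation, the second is the inverse isomorphism applied to $\Gamma_0$), so $|\Gamma'| = |\Gamma|$. Finally, since $n_* \mapsto 0$ we have $0 \in \Gamma'$, and the inverse isomorphism $L \to \Z^{d'}$ carries the lattice generated by $\Gamma_0$ onto the lattice generated by $\Gamma'$; as the former is $L$, the latter is all of $\Z^{d'}$, which has rank $d'$. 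Taking $0 \in \Gamma'$ as basepoint, this shows $\Gamma'$ has affine dimension $d'$, as required.

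I do not expect a serious obstacle: the argument is essentially an application of the structure theory of finitely generated modules over a PID recalled in the preliminaries. The one point that deserves a moment's care is verifying that $\Gamma'$ has affine dimension exactly $d'$ and not smaller; this is precisely where one uses that $A$ restricts to an \emph{isomorphism} onto $L$ rather than merely an injection, so that pulling back a generating set of $L$ yields a generating set of all of $\Z^{d'}$.
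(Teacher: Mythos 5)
Your proof is correct and follows essentially the same route as the paper: translate by a basepoint $n_*$, write the rank-$d'$ lattice generated by $\Gamma - n_*$ as $A\Z^{d'}$ via an integral basis (freeness over the PID $\Z$), and take $\Gamma'$ to be the preimage of $\Gamma - n_*$ under the injective map $A$, which preserves cardinality and affine dimension. The extra detail you supply (the explicit isomorphism and the check that $\Gamma'$ generates all of $\Z^{d'}$) is just a fuller write-up of the paper's argument.
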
 

Note that we allow $\Gamma$ to have infinite countable cardinality.

\begin{proof} 
For any $n_* \in \Gamma$, the lattice in $\Z^d$ generated by $\Gamma - n_*$ can be written as $A \Z^{d'}$ for some $A \in \Z^{d \times d'}$ of rank $d'$. The fact that $A$ is injective shows that the affine dimension of the preimage of $\Gamma - n_*$ under $A$ is the same as the affine dimension of $\Gamma$. The set $\Ga'$ is the preimage of $\Gamma - n_*$ under $A$; if $\Ga$ is finite then $|\Ga'| = |\Ga|$; if $\Gamma$ is infinite then so is $\Ga'$.
\end{proof}

\subsection{Affine abundance}\label{sec_affine_abundance}

We say a set $\Gamma \subset \Z^d$ is  affinely abundant, if there exists a $d$-tuple of points $n_1, \dots, n_d \in \Gamma$ such that the set
\[
\{\det \left(\widetilde{n}, \widetilde{n_1}, \dots, \widetilde{n_d} \right) \colon n \in \Gamma\}
\]
is infinite. There is a simple equivalent characterization.

\begin{prop}\label{prop_affine_abundant}
A set $\Gamma\subset\Z^d$ is affinely abundant if and only if it is infinite and has affine dimension $d$. 
\end{prop}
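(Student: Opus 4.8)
The plan is to prove the two implications separately; the forward direction is straightforward, and the reverse direction contains the one genuine idea.

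For the forward direction, suppose $\Gamma$ is affinely abundant, witnessed by a $d$-tuple $n_1, \dots, n_d \in \Gamma$. First, $\Gamma$ must be infinite, since otherwise only finitely many values $\det(\widetilde n, \widetilde{n_1}, \dots, \widetilde{n_d})$ can occur as $n$ ranges over $\Gamma$. To see that the affine dimension is $d$, I would note that $n \mapsto \det(\widetilde n, \widetilde{n_1}, \dots, \widetilde{n_d})$ is an affine-linear function of $n$ (expand the determinant along the first column, whose entries are $1, n^{(1)}, \dots, n^{(d)}$). If $\Gamma$ had affine dimension $d' < d$, then $\Gamma$ would be contained in an affine hyperplane $H = \{x \in \R^d : \langle v, x\rangle = c\}$ with $v \ne 0$; but then every $\widetilde m = (1,m)$ with $m \in H$ lies in the $d$-dimensional subspace $W = \{(t,x) \in \R^{d+1} : \langle v, x\rangle = ct\}$, so $\widetilde n, \widetilde{n_1}, \dots, \widetilde{n_d}$ are $d+1$ vectors lying in the $d$-dimensional space $W$, forcing the determinant to vanish for every $n \in \Gamma$ — contradicting abundance. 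Since the affine dimension of any subset of $\Z^d$ is at most $d$, it must equal $d$.

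For the reverse direction, assume $\Gamma$ is infinite with affine dimension $d$. Then $\Gamma$ contains an affinely independent subset $\{n_0, n_1, \dots, n_d\}$ of exactly $d+1$ points, so $D := \det(\widetilde{n_0}, \widetilde{n_1}, \dots, \widetilde{n_d}) \ne 0$ by \eqref{affine_indep_cond}. For each $j \in \{0, \dots, d\}$ let $f_j(n)$ denote the determinant obtained from $\det(\widetilde{n_0}, \dots, \widetilde{n_d})$ by replacing the $j$-th column $\widetilde{n_j}$ with $\widetilde n$; each $f_j$ is affine-linear in $n$. Since $\widetilde{n_0}, \dots, \widetilde{n_d}$ is a basis of $\Q^{d+1}$, Cramer's rule gives $\widetilde n = D^{-1}\sum_{j=0}^d f_j(n)\,\widetilde{n_j}$ for every $n$; in particular, comparing first coordinates yields $\sum_j f_j(n) = D$, and the map $\Phi\colon n \mapsto (f_0(n), \dots, f_d(n))$ is injective because it recovers $\widetilde n$. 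Hence $\Phi(\Gamma)$ is an infinite subset of $\Z^{d+1}$, so at least one coordinate function $f_{j_0}$ takes infinitely many values on $\Gamma$. Taking the $d$-tuple to be $\{n_0, \dots, n_d\} \setminus \{n_{j_0}\}$ in any order, and observing that permuting the columns of a determinant changes it only by a sign, the set $\{\det(\widetilde n, \dots) : n \in \Gamma\}$ coincides up to sign with $f_{j_0}(\Gamma)$ and is therefore infinite, so $\Gamma$ is affinely abundant.

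The main obstacle is the reverse direction, and specifically the temptation to argue with a single affine-linear functional: knowing only that $\Gamma$ has affine dimension $d$ shows that any fixed non-constant affine-linear $f$ is non-constant on $\Gamma$, but not that it is unbounded on $\Gamma$. The resolution is to track all $d+1$ of the functions $f_0, \dots, f_d$ simultaneously — collectively they reconstruct $\widetilde n$, so $\Phi$ is injective on the infinite set $\Gamma$, which forces at least one of the $f_j$ to be unbounded there. Everything else — the affine-linearity of the relevant minors, Cramer's rule, the fact that an infinite subset of $\Z^{d+1}$ has an unbounded coordinate, and the hyperplane computation in the forward direction — is routine.
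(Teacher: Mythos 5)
Your proof is correct and takes essentially the same route as the paper: expand $\widetilde{n}$ in the basis $\widetilde{n_0},\dots,\widetilde{n_d}$ via Cramer's rule and observe that if every cofactor determinant took only finitely many values on $\Gamma$, then $\Gamma$ itself would be finite. Your injectivity-of-$\Phi$ phrasing and the explicit hyperplane argument for the forward direction are just more detailed versions of what the paper treats as immediate.
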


\begin{proof}
It is clear that affinely abundant sets are infinite and have affine dimension $d$. Now suppose $\Gamma\subset\Z^d$ is infinite and has affine dimension $d$.
There exist $m_0,\dots,m_d\in\Gamma$ so that $\det(\widetilde{m_0},\dots,\widetilde{m_d})\not=0$.
By Cramer's Rule:
\[ \widetilde{n} = \sum_{i=0}^d \frac{(-1)^{i} \det (\widetilde n, \widetilde {m_0} ,\ldots,\widehat{\widetilde{m_i}},\ldots,\widetilde{m_d})}{\det (\widetilde{ m_0},\ldots,\widetilde{m_d})} \widetilde{m_i} \qquad \forall n \in \Gamma, \]
where $\widehat{\cdot}$ denotes omission. (By linearity, it suffices to verify this for $\widetilde n = \widetilde m_j$ for some $j=0,\ldots,d$ because all coefficients of $\widetilde{m_i}$ vanish when $j \neq i$ because of a repeated column. The coefficient of $\widetilde{m_j}$ can be seen to equal $1$ by permuting columns of the determinant in the numerator to ``fill the hole.'') 
If for each $i$ the set 
$ \{ \det (\widetilde n, \widetilde {m_0} ,\ldots,\widehat{\widetilde{m_i}},\ldots,\widetilde{m_d}) \, : \, n \in \Gamma \} $
were finite, then every $\widetilde{n}$ would necessarily be one of finitely many distinct linear combinations of $\widetilde{m_0},\ldots,\widetilde{m_d}$, so there could be only finitely many values of $n$ in $\Gamma$.
\end{proof}

\section{A reduction to lower dimensions}
The violation of the strict majorant property indicated in Theorem~\ref{thm_d2} can be reduced to the following result:

\begin{thm}[Full affine dimension case]\label{thm_affine_indep}
Fix an integer  $d \geq 1$. If $\Gamma \subset \Z^d$ has a proper subset whose affine dimension is $d$, then there exists an integer $m \ge 0$, and real coefficients $(a_n)_{n \in \Gamma}$, such that for every $p \in (2m,2m+2)$, 
\[
\left\| \sum_{n \in \Gamma} |a_n| e(n \cdot x) \right\|_{L^p([0,1]^d)} < \left\| \sum_{n \in \Gamma} a_n e(n \cdot x) \right\|_{L^p([0,1]^d)}. 
\]
\end{thm}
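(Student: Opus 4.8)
The plan is to expand the $L^p$ norm to the power $p$ using the binomial series. Writing $F(x) = \sum_{n\in\Gamma} a_n e(n\cdot x)$ with real coefficients, and supposing (after a harmless shift by $-n_*$, absorbing a character which does not change the $L^p$ norm) that $0\in\Gamma$ and $a_0 = 1 > 0$, one has $F = 1 + G$ with $G = \sum_{n\neq 0} a_n e(n\cdot x)$ small if the $a_n$ are taken small. Since $|F|^p = (F\overline F)^{p/2} = (1 + (G+\overline G) + |G|^2)^{p/2}$, for $p\notin 2\N$ the generalized binomial expansion gives $\int_{[0,1]^d}|F|^p = \sum_{j\ge 0}\binom{p/2}{j}\int_{[0,1]^d}(2\Re G + |G|^2)^j$. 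The analogous expansion for $\||a_n|\ldots\|_p^p$ replaces $a_n$ by $|a_n|$ throughout. Subtracting, the $j=0$ and $j=1$ terms cancel identically (the mean of $G$ over the torus is $0$, and $\int |G|^2 = \sum_{n\neq 0}|a_n|^2$ is insensitive to signs), so the difference of the two $p$-th powers begins at the $j\ge 2$ terms. The first nonvanishing difference will come from a term of the form $c\cdot a_{n_1}a_{n_2}\cdots$ versus $c\cdot|a_{n_1}||a_{n_2}|\cdots$, which survives precisely when the frequencies $n_1,\dots$ together with $0$ are affinely dependent in the appropriate sense — here is where the hypothesis that $\Gamma$ has a proper subset of affine dimension $d$ enters: one selects $d+1$ points $0, n_1,\dots,n_d$ from that subset with $\det(\widetilde{n_1},\dots,\widetilde{n_d})\neq 0$, and then one more point $n_{d+1}\in\Gamma$ realizing the affine dependence, so that some integer relation $\sum c_i n_i = 0$ with $\sum c_i = 0$ and not all $c_i$ of one sign forces a genuine cancellation when absolute values are inserted.

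The concrete mechanism I would use: choose the coefficients $a_{n_i}$ supported on these $d+2$ special points, with all but one positive and exactly one (say $a_{n_{d+1}}$) allowed to be negative, of small size $\delta$, and all other $a_n = 0$. The dominant correction to $\|F\|_p^p - \||a|\cdot\|_p^p$ as $\delta\to 0$ is then a fixed monomial in $\delta$ coming from the smallest $j$ for which the frequency combinatorics permit a nonzero ``resonant'' contribution: that is, a multi-index where the net frequency $\sum(\text{monomial in }n_i\text{'s and }-n_j\text{'s}) = 0$. By the affine dependence one such resonance exists; by affine independence of $0,n_1,\dots,n_d$ the low-order resonances are controlled. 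The sign of $\binom{p/2}{j}$ for this critical $j$ dictates the inequality. Choosing $\delta$ and the single sign so that this leading term is strictly positive — and noting $\binom{p/2}{j}$ has a constant sign on each interval $(2m,2m+2)$ — yields the claim for $p$ in the interval $(2m,2m+2)$ where $m$ is determined by the parity making the relevant binomial coefficient the right sign. The remaining terms in the expansion are $O(\delta^{j+1})$ uniformly for $p$ in a compact subinterval, hence negligible for $\delta$ small; one must also check convergence of the binomial series, which holds for $|G|$ small (e.g. $\|G\|_\infty < 1/3$) uniformly in $x$.

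The main obstacle, I expect, is the bookkeeping that shows the leading resonant term is \emph{genuinely} present and \emph{genuinely} changes under $a_n\mapsto|a_n|$, and pinning down which $j$ (hence which interval $(2m,2m+2)$) it lives in. One has to rule out that all would-be resonances at the critical order either vanish by symmetry or are sign-independent; this is exactly where affine independence of the chosen $d+1$ points is used — it guarantees that a frequency combination $\sum \beta_i n_i - \sum\gamma_j n_j$ vanishes only for the ``expected'' relations coming from the single affine dependence, so the critical monomial in $\delta$ has a clean, explicitly computable coefficient proportional to a power of the binomial coefficient times a nonzero combinatorial constant. Once that coefficient is shown nonzero with a controllable sign, the rest is a routine perturbation argument. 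A secondary point is handling the reduction: to invoke Proposition \ref{prop_affine_inv} one pushes down to dimension $d'=d$ on the proper subset, but since the subset already has affine dimension $d$ the ambient dimension needn't decrease; one simply works directly with $\Gamma$ itself and the distinguished $d+2$ points, setting all other coefficients to zero.
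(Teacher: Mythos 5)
Your overall strategy --- perturb around a distinguished coefficient $1$, expand $\|\cdot\|_{L^p}^p$ by the generalized binomial series, and isolate the lowest-order resonant monomial that is sensitive to the signs of the $a_n$ --- is the same as the paper's. But the crux is missing, and the one structural claim you make about the resonance points the wrong way. After translating the distinguished point to the origin, the resonance condition is the \emph{linear} relation $\sum_i(\beta_i-\gamma_i)n_i=0$ among the nonzero frequencies (the constant term imposes no condition on the coefficient sum), and the leading sign-sensitive coefficient is, as in Proposition \ref{prop_summary}, $-2\binom{p/2}{|c_-|}\binom{p/2}{|c_+|}$ times positive multinomial factors, where $c=c_+-c_-$ generates the one-dimensional relation lattice. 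A violation requires choosing $p$ with $-\binom{p/2}{|c_-|}\binom{p/2}{|c_+|}>0$, which forces $|c_+|\neq|c_-|$, i.e. $\sum_i c_i\neq 0$: if $\sum_i c_i=0$ then $|c_+|=|c_-|=:m$ and the leading coefficient is $-\binom{p/2}{m}^2\cdot(\text{positive})\le 0$ for \emph{every} $p$, so the leading-order term can never yield the desired strict reverse inequality. You assert precisely ``$\sum c_i=0$'' and present it as the mechanism of the counterexample; that is the opposite of what is needed. Your choice of frequencies ($n_1,\dots,n_d$ linearly independent plus an arbitrary extra point $n_{d+1}$) also does not guarantee $\sum_i c_i\ne0$ (it fails exactly when $n_{d+1}$ lies in the affine hyperplane through $n_1,\dots,n_d$). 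The paper avoids this by reversing the roles: the \emph{extra} point carries the coefficient $1$, and the $d+1$ perturbation frequencies form an affinely independent set, so that by \eqref{affine_cond} one has $\sum_i c_i=D^{-1}\det(\widetilde{n_0},\dots,\widetilde{n_d})\neq 0$.

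Second, the sign analysis that pins down the interval $(2m,2m+2)$ is absent, and in your organization it cannot even be stated cleanly: expanding $(1+2\Re G+|G|^2)^{p/2}$ in powers of $2\Re G+|G|^2$, the leading resonant monomial $a^{c_-+c_+}$ receives contributions from many values of $j$, so ``the sign of $\binom{p/2}{j}$ for this critical $j$'' is not a single well-defined sign. The paper expands $F^{p/2}\overline{F}^{p/2}$ as a product of two binomial series, making the coefficient the product $\binom{p/2}{|c_-|}\binom{p/2}{|c_+|}$, and then needs three facts you do not establish: $|c_+|\ne|c_-|$ (as above), $\max\{|c_+|,|c_-|\}\ge 2$ (otherwise $c$ would have a single entry $\pm1$, forcing some $n_i=0$), and the existence of an odd $c_i$ (from $\gcd(c_0,\dots,c_d)=1$), which is what lets one negative coefficient flip the sign of $a^{c_-+c_+}$. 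With $m_+=\max\{|c_+|,|c_-|\}$ and $m_-=\min\{|c_+|,|c_-|\}$, taking $p/2\in(m_+-2,\,m_+-1)$ gives $\binom{p/2}{m_+}<0$ and $\binom{p/2}{m_-}>0$, fixing $m=m_+-2\ge0$. Without these steps your sketch does not determine $m$, and under your $\sum c_i=0$ normalization the argument cannot be completed at leading order.
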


Similarly, the proof of Theorem~\ref{thm_infinite} can be reduced to the following:

\begin{thm}[Affinely abundant case]\label{thm_affine_abun}
 Fix an integer  $d \geq 1$. If $\Gamma \subset \Z^d$ is affinely abundant, then for infinitely many positive integers $m$, there exist real coefficients $(a_n)_{n\in\Gamma}$ such that for every $p\in (2m, 2m+2)$,
\[
\left\| \sum_{n \in \Gamma} |a_n| e(n \cdot x) \right\|_{L^p([0,1]^d)} < \left\| \sum_{n \in \Gamma} a_n e(n \cdot x) \right\|_{L^p([0,1]^d)}. 
\]
 \end{thm}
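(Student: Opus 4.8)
The plan is to pass to a sub-configuration supported on only $d+2$ frequencies, chosen via affine abundance so that a prescribed ``additive order'' is attained, and then to run a convergent expansion of $\|\cdot\|_p^p$ in which the failure of the majorant inequality is produced by the sign change of the generalized binomial coefficient $\binom{p/2}{j}$. First I would translate $\Gamma$ (which changes no $L^p$ norm, since it only multiplies by a unimodular exponential): by Proposition~\ref{prop_affine_abundant}, $\Gamma$ is infinite of affine dimension $d$, so choosing $n_1,\dots,n_d$ as in the definition of affine abundance and a point $n_0\in\Gamma$ with $\det(n_1-n_0,\dots,n_d-n_0)\neq0$, and translating by $-n_0$, I arrange $0\in\Gamma$, I fix linearly independent $v_1,\dots,v_d\in\Gamma$, and I know $\{\det(v_1-w,\dots,v_d-w):w\in\Gamma\}$ is an infinite — hence unbounded — set of integers. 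For $w\in\Gamma\setminus\{0,v_1,\dots,v_d\}$, writing the primitive integral relation $D_w w=p_1v_1+\dots+p_dv_d$, set $\rho(w)=\max\bigl(\textstyle\sum_{p_i>0}p_i,\ D_w+\sum_{p_i<0}|p_i|\bigr)$; an elementary check shows $\Gamma_w:=\{0,v_1,\dots,v_d,w\}$ has all its $h$-fold sumsets distinct exactly when $h<\rho(w)$. Since $\det(v_1-w,\dots,v_d-w)$ equals $\det(v_1,\dots,v_d)$ times an affine function of $p_1/D_w,\dots,p_d/D_w$ (Cramer's rule, as in the proof of Proposition~\ref{prop_affine_abundant}), unboundedness of these determinants forces $\rho(w)$ to be unbounded on $\Gamma$, so $\rho$ takes infinitely many values; it then suffices to treat the (infinite) set of $m$ with $\rho(w)=m+2$ for some $w$. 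Fix such $m$ and $w$, so $\Gamma_w$ has all $(m+1)$-fold sumsets distinct but not all $(m+2)$-fold ones, and let $r$ be the primitive relation, which being primitive has an odd coefficient.

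Next I would set up the expansion. Take real coefficients $(a_n)_{n\in\Gamma_w}$ with $a_0=1$ and the rest small; with $f=\sum a_n e(n\cdot x)$, $\widetilde f=\sum|a_n|e(n\cdot x)$, write $|f|^2=c_0(1+u)$ and $|\widetilde f|^2=c_0(1+\widetilde u)$, where $c_0=\sum a_n^2=\sum|a_n|^2$ and $\|u\|_\infty<1$ once the small coefficients are small enough. Expanding $(1+u)^{p/2}$ gives
\[
\|f\|_p^p-\|\widetilde f\|_p^p=c_0^{p/2}\sum_{j\ge 0}\binom{p/2}{j}c_0^{-j}\bigl(R_j-\widetilde R_j\bigr),\qquad R_j=\!\!\sum_{\substack{\nu_1,\dots,\nu_j\neq 0\\ \nu_1+\dots+\nu_j=0}}\!\! c_{\nu_1}\cdots c_{\nu_j},
\]
with $c_\nu=\sum_{n-n'=\nu}a_na_{n'}$ and $\widetilde R_j$ the analogue for $\widetilde f$. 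Two facts drive the proof. First, $|c_{\nu_1}\cdots c_{\nu_j}|\le\widetilde c_{\nu_1}\cdots\widetilde c_{\nu_j}$ termwise, so $R_j-\widetilde R_j\le 0$ for every $j$; and for $p\in(2m,2m+2)$ one has $\binom{p/2}{j}>0$ for $j\le m+1$ while $\operatorname{sign}\binom{p/2}{j}=(-1)^{j-m-1}$ for $j\ge m+1$. Second, because all $(m+1)$-fold sumsets of $\Gamma_w$ are distinct, any relation $\sum_{i=1}^{j}\xi_{a_i}=\sum_{i=1}^{j}\xi_{b_i}$ with $j\le m+1$ forces $\{a_i\}=\{b_i\}$ as multisets, so the corresponding monomial in $R_j$ is a perfect square and is unchanged under $a\mapsto|a|$; hence $R_j=\widetilde R_j$ for all $j\le m+1$. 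Meanwhile, at level $j=m+2=\rho(w)$ the only realizable nontrivial relation is $\pm r$, and choosing $\operatorname{sign}(a_{n_*})=-1$ at one index $n_*\neq0$ with $r_{n_*}$ odd (and $+1$ elsewhere) makes the single associated monomial negative, so $R_{m+2}-\widetilde R_{m+2}<0$. Thus the norm difference equals $\sum_{j\ge m+2}\binom{p/2}{j}c_0^{p/2-j}(R_j-\widetilde R_j)$, whose leading term $\binom{p/2}{m+2}c_0^{p/2-m-2}(R_{m+2}-\widetilde R_{m+2})$ is a product of two negatives, hence positive.

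Finally I would make the small coefficients $\pm t^{e_n}$ for suitable exponents $e_n>0$ and let $t\to0^+$. All $R_j-\widetilde R_j$ with $j\ge m+2$ then enter at the same power $t^{Q_0}$, $Q_0=\sum_{n\neq0}|r_n|e_n$, with leading coefficient $-2N_jt^{Q_0}$, where $N_j\ge0$ is the number of ordered level-$j$ realizations of $r$ of minimal weight (all of them carrying the sign we fixed), so
\[
\|f\|_p^p-\|\widetilde f\|_p^p=-2\,c_0^{p/2}\,t^{Q_0}\sum_{j\ge m+2}\binom{p/2}{j}c_0^{-j}N_j+o(t^{Q_0}).
\]
It then suffices to show $\sum_{j\ge m+2}\binom{p/2}{j}N_j<0$ for all $p\in(2m,2m+2)$ and to fix $t$ small depending only on $m$: on compact subintervals of $(2m,2m+2)$ the sum is bounded away from $0$, while near $p=2m$ (resp.\ $p=2m+2$) every $\binom{p/2}{j}$ with $j\ge m+1$ (resp.\ $j\ge m+2$) has a simple zero — and the surviving levels all have $R_j-\widetilde R_j$ vanishing for $j\le m+1$ — so the norm difference factors out that simple zero with a coefficient whose sign is controlled the same way. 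Writing $p=2m+s$ with $s\in(0,2)$ and using $\binom{p/2}{j}=\sum_i\binom{m}{i}\binom{s/2}{j-i}$, I would read the sign of $\sum_j\binom{p/2}{j}N_j$ off the generating function of $(N_j)_j$ together with the elementary inequalities $\binom{s/2}{0},\binom{s/2}{1}>0>\binom{s/2}{2}$ for $s\in(0,2)$, i.e.\ from the one-dimensional Hardy--Littlewood phenomenon.

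The hard part will be exactly this last step. The leading-order contributions of the levels $j=m+2,m+3,\dots$ all enter at the same power of $t$, so no single term can be isolated by scaling; one must instead compute (or generating-function away) the combinatorial counts $N_j$ describing how $r$ can be padded to longer zero-sum words, combine them against the binomial coefficients, and verify the resulting (alternating) sign uniformly on the \emph{open} interval $(2m,2m+2)$, including the degenerate behaviour at its endpoints. By contrast, the only place affine abundance enters — namely, that $\rho(w)$ is unbounded along $\Gamma$, so that $m+2$ is attained for infinitely many $m$ — is immediate from Proposition~\ref{prop_affine_abundant}, and the algebraic facts about $B_h$-type configurations and the sign pattern of $\binom{p/2}{j}$ are elementary.
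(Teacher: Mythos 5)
Your reduction is sound and in fact parallels the paper's: translating so that $0\in\Gamma$, restricting to $d+2$ frequencies $\{0,v_1,\dots,v_d,w\}$, identifying the primitive relation, and observing that the minimal level $\rho(w)$ at which sumsets collide is unbounded over $w$ (your $\rho(w)$ coincides with the paper's $m_+=\max\{|c_+|,|c_-|\}$, and your interval $p\in(2m,2m+2)$ with $\rho(w)=m+2$ is exactly the paper's $p\in(2m_+-4,2m_+-2)$). The genuine gap is the step you yourself flag as ``the hard part'': after scaling $a_n=\pm t^{e_n}$, all levels $j\ge m+2$ of your expansion of $(1+u)^{p/2}$ contribute at the \emph{same} order $t^{Q_0}$, with weights $\binom{p/2}{j}$ that alternate in sign for $j\ge m+2$, so the facts you do establish ($N_j\ge 0$, $R_j\le\widetilde R_j$, $R_j=\widetilde R_j$ for $j\le m+1$) do not determine the sign of $\sum_{j\ge m+2}\binom{p/2}{j}c_0^{-j}N_j$. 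The generating-function/Vandermonde sketch at the end is not carried out, and without it the conclusion that the leading coefficient is negative on all of the open interval $(2m,2m+2)$ is unproved; the single term $j=m+2$ cannot simply be declared dominant, since the padded counts $N_j$ for $m+2<j\le P+N$ are comparable in size and enter with opposite signs.

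This is precisely the point where the paper's decomposition differs and does the work for you. Instead of expanding $|f|^p=c_0^{p/2}(1+u)^{p/2}$, the paper expands the two conjugate factors $\bigl(1+\sum_i b_i e(n_i\cdot x)\bigr)^{p/2}$ and $\bigl(1+\sum_j b_j e(-n_j\cdot x)\bigr)^{p/2}$ separately (Proposition~\ref{prop_summary}). Because the relation lattice of the nonzero frequencies is rank one, generated by the primitive $c$, the lowest-order off-diagonal contribution is then a \emph{single} monomial $b^{c_-+c_+}$ whose coefficient factors as $2\binom{p/2}{|c_-|}\binom{p/2}{|c_+|}\binom{|c_-|}{c_-}\binom{|c_+|}{c_+}$, so the sign question collapses to $-\binom{p/2}{|c_-|}\binom{p/2}{|c_+|}>0$, verified by inspection for $p\in(2m_+-4,2m_+-2)$ using $|c_+|\ne|c_-|$ (which follows from $c_0+\dots+c_d\neq 0$); unboundedness of $m_+$ is obtained as in your argument, via the observation that the coordinate $v_0$ is a minor independent of the varying point, so the relevant gcd stays bounded while the determinant grows. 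In effect, the inequality you still owe is equivalent to the product identity that this factored expansion yields for free; to finish along your route you would have to prove a Chu--Vandermonde-type convolution identity expressing $\sum_j N_j\binom{p/2}{j}c_0^{-j}$ (at leading order in $t$) in terms of $\binom{p/2}{|c_-|}\binom{p/2}{|c_+|}$, or otherwise control the alternating sum uniformly on the open interval, including near its endpoints where it vanishes. Either supply that identity, or switch to the factored expansion.
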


The key to these reductions is the following lemma.

\begin{lemma} \label{lem_affine_invar}
If $\Gamma \subset \Z^d$ and $\Gamma = n_* + A \Gamma'$ for some $n_* \in \Z^d$, $A \in \Z^{d \times d'}$ of rank $d' \leq d$ and a set $\Gamma' \subset \Z^{d'}$, then 
\[
\left\| \sum_{n \in \Gamma} b_n e(n \cdot x) \right\|_{L^p([0,1]^d)} = \left\| \sum_{n' \in \Gamma'} c_{n'} e(n' \cdot x') \right\|_{L^p([0,1]^{d'})}
\]
where $c_{n'} := b_{n_* + A n'}$.
\end{lemma}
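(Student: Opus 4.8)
The plan is to strip off a unimodular factor and then recognize what remains as a change of variables on tori. Since $A$ has rank $d'$ it is injective, so $n'\mapsto n_*+An'$ is a bijection from $\Gamma'$ onto $\Gamma=n_*+A\Gamma'$; re-indexing the sum by it gives
\[
\sum_{n\in\Gamma} b_n\, e(n\cdot x) = e(n_*\cdot x)\sum_{n'\in\Gamma'} c_{n'}\, e\big((An')\cdot x\big), \qquad c_{n'} = b_{n_*+An'}.
\]
Since $|e(n_*\cdot x)|\equiv 1$ and $(An')\cdot x = n'\cdot (A^\top x)$, the left-hand side of the lemma equals $\big(\int_{[0,1]^d} |G(A^\top x)|^p\, dx\big)^{1/p}$, where $G(y):=\sum_{n'\in\Gamma'} c_{n'} e(n'\cdot y)$ is a $\Z^{d'}$-periodic trigonometric polynomial on $\R^{d'}$. (When $\Gamma$ is infinite the applications only use coefficients $(b_n)$ with finitely many nonzero entries, so both sides are genuine finite trigonometric polynomials and the argument below applies verbatim; the fully general statement then follows by $L^p$ approximation.) Thus the lemma reduces to the purely measure-theoretic identity
\[
\int_{[0,1]^d} \big|G(A^\top x)\big|^p\, dx = \int_{[0,1]^{d'}} \big|G(y)\big|^p\, dy,
\]
valid for any $\Z^{d'}$-periodic integrable function in place of $|G|^p$.

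For this identity I would use the Smith normal form of $A$: write $A=UDV$ with $U\in\GL_d(\Z)$, $V\in\GL_{d'}(\Z)$, and $D\in\Z^{d\times d'}$ the matrix with positive integers $s_1,\dots,s_{d'}$ on the main diagonal and zeros elsewhere; all $d'$ of these entries are nonzero precisely because $\rk A=d'$. Then $A^\top=V^\top D^\top U^\top$. The function $x\mapsto|G(A^\top x)|^p$ is $\Z^d$-periodic (as $A$ is integral), so substituting the $\Z^d$-automorphism $x\mapsto(U^\top)^{-1}x$ leaves $\int_{[0,1]^d}$ unchanged and turns $A^\top x$ into $V^\top D^\top x$. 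Setting $\widetilde{G}(z):=G(V^\top z)$, which is again $\Z^{d'}$-periodic, the integral becomes $\int_{[0,1]^d}|\widetilde{G}(D^\top x)|^p\,dx$. Now $D^\top x=(s_1x_1,\dots,s_{d'}x_{d'})$ depends only on $x_1,\dots,x_{d'}$, so by Fubini the integral over $[0,1]^d$ collapses to $\int_{[0,1]^{d'}}|\widetilde{G}(s_1x_1,\dots,s_{d'}x_{d'})|^p\,dx_1\cdots dx_{d'}$; and because $|\widetilde{G}|^p$ is $1$-periodic in each coordinate and each $s_i$ is a positive integer, the elementary fact $\int_0^1\phi(st)\,dt=\int_0^1\phi(v)\,dv$ (for positive integers $s$ and $1$-periodic $\phi$), applied in each variable, reduces this to $\int_{[0,1]^{d'}}|\widetilde{G}(x)|^p\,dx$, which equals $\int_{[0,1]^{d'}}|G(y)|^p\,dy$ since $x\mapsto V^\top x$ is a $\Z^{d'}$-automorphism.

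Nothing here is deep. The rank hypothesis enters exactly twice: to make $n'\mapsto n_*+An'$ a bijection, and to force all $d'$ Smith invariants $s_i$ to be nonzero — equivalently, to make $A^\top\colon\R^d\to\R^{d'}$ surjective, so that $x\mapsto A^\top x$ pushes Lebesgue measure on $\T^d$ to Lebesgue measure on $\T^{d'}$, the Smith normal form computation above being merely an explicit proof of this standard fact. The only real bookkeeping is the infinite-$\Gamma$ case, which I expect to be the main — and still minor — point to dispatch: one proves the identity for finitely supported coefficients, all that Theorems \ref{thm_affine_indep} and \ref{thm_affine_abun} require, and extends it afterward by approximation if one wants the statement in full generality.
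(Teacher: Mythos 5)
Your argument is correct, and it follows the same overall strategy as the paper (strip off the unimodular factor $e(n_*\cdot x)$ using the injectivity of $n'\mapsto n_*+An'$, exploit integrality and periodicity to change variables on the torus, and finish with Fubini), but the matrix-theoretic core is organized differently. The paper first completes $A$ to a square matrix $B\in\Z^{d\times d}$ with nonzero determinant by appending $d-d'$ coordinate-vector columns, and then proves a standalone change-of-variables lemma (Lemma \ref{lemma_change_var}) for such $B$: it reduces $B$ to Hermite normal form, writing it as a product of elementary matrices in $\GL(d,\Z)$ times an upper triangular integer matrix, and then performs $d$ successive one-dimensional periodic substitutions. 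You instead keep $A$ rectangular and invoke the Smith normal form $A=UDV$, so that after two applications of the fact that a $\GL(d,\Z)$ substitution preserves the integral of a $\Z^d$-periodic function over $[0,1]^d$, only the diagonal factor $D^\top$ survives, and the conclusion follows from Fubini together with the one-variable identity $\int_0^1\phi(st)\,dt=\int_0^1\phi(t)\,dt$ for positive integers $s$ and $1$-periodic $\phi$. Your route avoids the completion step and makes the role of the rank hypothesis completely explicit (it forces all Smith invariants $s_1,\dots,s_{d'}$ to be nonzero, i.e.\ makes $A^\top$ push Haar measure on $\T^d$ to Haar measure on $\T^{d'}$); the paper's route is more self-contained, since its Hermite-normal-form argument actually proves the unimodular invariance that you quote as standard (it is the $|\det|=1$ case of Lemma \ref{lemma_change_var}, or can be seen directly because the image of $[0,1]^d$ under a unimodular integer matrix is a fundamental domain for $\Z^d$), and it yields a reusable lemma that the paper also applies in Proposition \ref{prop_characterize}. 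Your added remarks on the bijectivity of the re-indexing and on restricting to finitely supported coefficients when $\Gamma$ is infinite are correct and address points the paper passes over silently; neither affects the applications, which only ever use finitely many nonzero coefficients.
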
 

Assuming the lemma for the moment, we  deduce Theorems~\ref{thm_d2} and \ref{thm_infinite}.

\begin{proof}[Proof of Theorem~\ref{thm_d2}]
Suppose $\Gamma \subset \Z^d$ is not affinely independent. Then its cardinality is at least $d' +2$ where $d'$ is the affine dimension of $\Ga$. By Proposition~\ref{prop_affine_inv}, we may find $n_* \in \Gamma$, $A \in \Z^{d \times d'}$ of rank $d'$ and a set $\Gamma' \subset \Z^{d'}$ of affine dimension $d'$ and the same cardinality as $\Gamma$, such that $\Gamma = n_* + A \Gamma'$. Since $\Ga'$ has affine dimension $d'$ and cardinality at least $d'+2$, it contains a proper subset that still has affine dimension $d'$. Now apply the identity of Lemma~\ref{lem_affine_invar} to both sides of the inequality \eqref{eq:thm1.1} claimed in Theorem~\ref{thm_d2}. After these transformations, the resulting inequality is true for all $p \in (2m,2m+2)$ for an appropriate choice of integer $m$ and coefficients $a_n$ by Theorem~\ref{thm_affine_indep}, applied to $\Gamma' \subset \Z^{d'}$ and affine dimension $d'$. In particular, the strict majorant property fails for some $p > 0$.

Conversely, in Proposition \ref{prop_characterize}, we show that if $\Ga \subset \Z^d$ is affinely independent, then the strict majorant property holds for all $p>0$. This completes  the characterization in Theorem \ref{thm_d2}.
\end{proof}

\begin{proof}[Proof of Theorem~\ref{thm_infinite}]
In the setting of Theorem \ref{thm_infinite}, $\Gamma \subset \Z^d$ is infinite. Let $d'$ be the affine dimension of $\Gamma$.
By Proposition~\ref{prop_affine_inv}, we may find $n_* \in \Ga$, $A \in \Z^{d \times d'}$ of rank $d'$, and an infinite set $\Gamma' \subset \Z^{d'}$ of affine dimension $d'$ such that $\Gamma=n_* + A \Gamma'$. 
By Proposition~\ref{prop_affine_abundant}, $\Gamma'$ is affinely abundant.

Since $\Gamma=n_* + A \Gamma'$, we may apply Lemma~\ref{lem_affine_invar} to both  sides of the inequality claimed in Theorem \ref{thm_infinite}. After these transformations, the resulting inequality is true for an infinite sequence of integers $m$ and appropriate coefficients $a_n$ by  Theorem~\ref{thm_affine_abun}, applied to the affinely abundant $\Gamma' \subset \Z^{d'}$.  
\end{proof}

We now prove Lemma~\ref{lem_affine_invar}; and then turn to  Theorems~\ref{thm_affine_indep} and \ref{thm_affine_abun}.  

\subsection{Proof of Lemma~\ref{lem_affine_invar}}
Under the hypotheses of Lemma \ref{lem_affine_invar}, for any coefficients $b_n$ define $c_{n'} = b_{n_* + An'}$. Then for all $x \in [0,1]^d,$
 \[ \Big| \sum_{n \in \Ga} b_n e(n \cdot x) \Big|  = \Big| \sum_{n' \in \Ga' } c_{n'} e((n_* + An') \cdot x) \Big|  = \Big| \sum_{n' \in \Ga' } c_{n'} e(An' \cdot x) \Big|.\]
We now complete $A$ to a $d \times d$ matrix $B$ with integer entries and non-zero determinant by appending $d-d'$ suitable columns of coordinate vectors.
Then for each column vector $n'\in\Gamma'$,
$ An' =  B\, \begin{pmatrix}
n'\\ 0\end{pmatrix}
; $
where $0$ stands for a zero in each of the $d'+1, d'+2, \ldots, d$-th places in the column.
 Then 
  \beq\label{A_to_B} \int_{[0,1]^d} \Big|\sum_{n \in \Ga} b_n e(n \cdot x)\Big|^p dx
   =\int_{[0,1]^d} \Big|\sum_{n' \in \Ga'} c_{n'} e( \begin{pmatrix}
n'\\ 0\end{pmatrix} \cdot B^t x) \Big|^p dx.\eeq
Because $B^t$ is invertible and has integer entries and $e(\cdot)$ is $1$-periodic, an integral over $[0,1]^d$ is invariant under a change of variables that eliminates the matrix; we record this as a lemma, whose proof we defer until the end of the section. 

\begin{lemma}\label{prop:linchgvar}\label{lemma_change_var}
Let $B$ be  a $d \times d$ matrix with integer entries and nonzero determinant. If $F$ is a 1-periodic complex-valued function on $\R^d$, that is   $F(x+m) = F(x)$ for all $m \in \Z^d$, then 
\beq\label{B_int}  \int_{[0,1]^d} F(B x) dx = \int_{[0,1]^d} F(x) dx . \eeq
\end{lemma}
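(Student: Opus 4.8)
The plan is to reduce the integral over the unit cube to a sum over a full set of coset representatives and exploit periodicity. First I would use that $B \in \Z^{d\times d}$ is invertible over $\Q$ but not necessarily over $\Z$; its image $B\Z^d$ is a finite-index sublattice of $\Z^d$ with $[\Z^d : B\Z^d] = |\det B|$. The map $x \mapsto Bx$ sends the cube $[0,1]^d$ onto the fundamental parallelepiped $B([0,1]^d)$, and the change of variables formula gives
\[
\int_{[0,1]^d} F(Bx)\,dx = \frac{1}{|\det B|}\int_{B([0,1]^d)} F(y)\,dy.
\]
So it suffices to show that $\int_{B([0,1]^d)} F(y)\,dy = |\det B| \int_{[0,1]^d} F(y)\,dy$ whenever $F$ is $\Z^d$-periodic.

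The key step is a tiling argument. The parallelepiped $P := B([0,1]^d)$ has volume $|\det B|$ and is a fundamental domain for the sublattice $B\Z^d$ acting on $\R^d$, i.e. $\R^d = \bigsqcup_{v \in B\Z^d}(P + v)$ up to measure zero. On the other hand, $\R^d = \bigsqcup_{w \in \Z^d}([0,1)^d + w)$. Choosing coset representatives $w_1,\dots,w_N \in \Z^d$ for $\Z^d/B\Z^d$ with $N = |\det B|$, one checks that $\{[0,1)^d + w_j + v : 1 \le j \le N,\ v \in B\Z^d\}$ also tiles $\R^d$, so $P$ and $\bigsqcup_{j=1}^N ([0,1)^d + w_j)$ are both fundamental domains for $B\Z^d$; hence they differ by a $B\Z^d$-periodic rearrangement (cut $P$ into pieces and translate each by an element of $B\Z^d$). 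Since $F$ is $\Z^d$-periodic, it is in particular $B\Z^d$-periodic, so $\int_P F = \sum_{j=1}^N \int_{[0,1)^d + w_j} F = \sum_{j=1}^N \int_{[0,1)^d} F = N\int_{[0,1]^d} F$, using $\Z^d$-periodicity again to drop the shifts $w_j$. Combining with the change of variables identity yields \eqref{B_int}.

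An alternative, perhaps cleaner, route avoids geometry entirely via Fourier series: expand $F(x) = \sum_{k \in \Z^d} \hat F(k) e(k\cdot x)$ (for $F$ smooth, or in an $L^2$ sense, then approximate), so that $F(Bx) = \sum_k \hat F(k) e((B^t k)\cdot x)$. Integrating over $[0,1]^d$ picks out exactly the terms with $B^t k = 0$, i.e. $k = 0$ since $B^t$ is invertible, giving $\int_{[0,1]^d} F(Bx)\,dx = \hat F(0) = \int_{[0,1]^d} F(x)\,dx$. This handles trigonometric polynomials $F$ immediately and extends to general $1$-periodic $F$ (for which the integral makes sense) by density; in the application $F$ is the $p$-th power of the modulus of a trigonometric polynomial, so one should either note that $|g|^p$ is continuous and periodic hence approximable by its Fejér means, or simply run the tiling argument above which needs no regularity.

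The main obstacle is purely bookkeeping: one must be careful that $B$ need not be unimodular, so the change of variables genuinely introduces the factor $|\det B|$ on one side which is then cancelled by the $N$-fold tiling count on the other; conflating $B$ with a unimodular matrix would make the statement look trivial when it is slightly more subtle. I expect the Fourier-series proof to be the shortest to write up rigorously, with the remark that $F = |g|^p$ for a trigonometric polynomial $g$ is bounded and continuous so that its Fourier series converges in $L^1([0,1]^d)$ and term-by-term integration after the substitution $x \mapsto B^t x$ (or directly $x \mapsto Bx$) is justified.
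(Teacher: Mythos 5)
Your proof is correct, but it takes a genuinely different route from the paper. You pass through lattice theory: after the Jacobian change of variables you compare $B([0,1]^d)$ with the union of $|\det B|$ unit cubes shifted by coset representatives of $\Z^d/B\Z^d$, note both are fundamental domains for the sublattice $B\Z^d$ (using $[\Z^d:B\Z^d]=|\det B|$), and use $B\Z^d$-periodicity of $F$ to equate the integrals; your Fourier-series variant is also sound, and is indeed the quickest rigorous write-up for the application, where $F=|g|^p$ is continuous so Fej\'er means converge uniformly (for merely integrable $F$ one should rely on the tiling argument, since an $L^1$ approximation of $F(B\,\cdot)$ would implicitly invoke the lemma being proved). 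The paper instead argues by row reduction: it first proves the identity when $B$ is upper triangular with integer entries, by a successive one-variable substitution in which periodicity lets an interval of integer length $|b_{jj}|$ be replaced by $[0,1]$, and then reduces a general $B$ to this case via Hermite normal form, writing $B=EB'$ with $E\in\GL(d,\Z)$ a product of elementary matrices and $B'$ upper triangular. Your approach is more conceptual and treats arbitrary measurable periodic integrands in one stroke, at the cost of quoting the index formula and the fundamental-domain exchange argument (whose standard proofs themselves go through Hermite/Smith normal form); the paper's approach is longer but entirely elementary and constructive. Both hinge on the same subtlety you correctly flag: $B$ need not be unimodular, and the factor $|\det B|$ must be cancelled by periodicity rather than ignored.
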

We apply the lemma in (\ref{A_to_B}) to conclude that
\[ \begin{split} \int_{[0,1]^d}  \Big| \sum_{n\in\Gamma} b_n e(n\cdot x)\Big|^p dx  & = \int_{[0,1]^d} \Big|\sum_{n' \in \Ga'} c_{n'} e( \begin{pmatrix}
n'\\ 0\end{pmatrix} \cdot x) \Big|^p dx \\ & = \int_{[0,1]^{d'}} \Big|\sum_{n' \in \Ga'} c_{n'} e(n' \cdot x')\Big|^p dx', \end{split} \]
where the last step follows by Fubini's theorem.  Lemma \ref{lem_affine_invar} is proved.

\subsection{Proof of Lemma \ref{lemma_change_var}}
Let us first suppose that in addition to being a matrix with integer entries and nonzero determinant, $B = (b_{ij})$ is  upper triangular.
Then (\ref{B_int}) holds
by a successive change of variables in the $d$ coordinates of $[0,1]^d$. Indeed, to evaluate
\begin{equation}
\int_{[0,1]^d} F(b_{11} x_1 + \dots + b_{1d} x_d, \dots, b_{dd} x_d) dx, \label{coveq1}
\end{equation}
let $y_1 = b_{11} x_1 + \dots + b_{1d} x_d$; as $x_1$ varies over $[0,1]$, $y_1$ varies over an interval of length $|b_{11}| \in \mathbb{N}$. By periodicity of $F$ in the first variable, \eqref{coveq1} becomes
\[
\int_{[0,1]^d} F(y_1, b_{22} x_2 + \dots + b_{2d} x_d, \dots, b_{dd} x_d) dy_1 dx',
\]
in which $x' = (x_2,\ldots, x_d)$.
Now  by successively setting $y_j = b_{jj} x_j + \dots + b_{jd} x_d$ for $j= 2, \dots, d$, this proves the claim. 

In general, we must show that given any matrix $A$ with integer entries and nonzero determinant, there exists a matrix $E \in \GL(d,\Z)$, given by a product of elementary matrices (in particular, of  determinant $\pm1$), such that $A= EB$, where $B$ is upper triangular  (and has integer entries and nonzero determinant). For then certainly 
$
\int_{[0,1]^d} F(A x) dx = \int_{[0,1]^d} F(B x) dx,$
and we may apply the special case proved above.

This is essentially the claim that such a matrix $A$ can be put in  Hermite normal form by applying only elementary row operations corresponding to matrices in $\mathrm{GL}(d,\Z)$, i.e.  we can transform $A$  to an upper triangular matrix $B$ with integer entries and nonzero determinant  in finitely many steps, using only row swaps and replacing a row by its sum with a multiple of another row. We describe this process. First we swap rows until the entry in the first column with the smallest absolute value is brought to the first row. Then we subtract off multiples of the first row from the remaining rows to reduce the absolute values of the first entries of the remaining rows as much as possible. Then we iterate this process (effectively, running the Euclidean algorithm on the entries of the first column). After finitely many steps,  the top entry of the first column is the gcd of all the entries in the first column of the original matrix $A$, and the remaining entries are zero. We may now repeat on the $(d-1) \times (d-1)$ minor obtained from the above matrix by deleting its first row and first column. After finitely many iterations, we obtain an upper triangular matrix $B$ with integer entries and nonzero determinant.

\section{Initial setup to prove the main theorems}\label{sec_setup}

In the proofs of Theorems \ref{thm_affine_indep} and \ref{thm_affine_abun}, fix $a_{n_{\bullet}} = 1$ for some $n_{\bullet} \in \Gamma$,  choose suitable $n_0, \dots, n_d \in \Gamma$ and set $a_n = 0$ for all $n \notin \{n_{\bullet}, n_0, \dots, n_d\}$. By using that $|e(n_{\bullet} \cdot x)| = 1$ and renaming $n_i-n_{\bullet}$ as $n_i$ for $i = 0, \dots, d$, without loss of generality we may assume $n_{\bullet} = 0$. Then it remains to show that
\[
  \Big\| 1+  \sum_{i=0}^d |a_{n_i}| e(n_i\cdot x) \Big\|_{L^p([0,1]^d)}^p <   \Big\| 1+  \sum_{i=0}^d a_{n_i} e(n_i\cdot x) \Big\|_{L^p([0,1]^d)}^p
\]
for some suitable coefficients $a_{n_0}, \dots, a_{n_d}$. The $a_{n_0}, \dots, a_{n_d}$ will be chosen to be small; for this reason, and to prepare ourselves for the subsequent proof of Theorem~\ref{thm_moment_curve}, it helps to understand the following Taylor expansion for
\begin{equation}
\Big\| 1+  \sum_{i=0}^d b_i e(n_i\cdot x) \Big\|_{L^p([0,1]^d)}^p \label{totaylor}
\end{equation}
around $(b_0, \dots, b_d) = (0, \dots, 0)$.

Fix any $p>0$. The expression \eqref{totaylor} equals
\begin{align} 
& \int_{[0,1]^d} \Big( 1 +  \sum_{i=0}^d b_i e(n_i \cdot x) \Big)^{p/2} \Big( 1 +  \sum_{j=0}^d b_j e(-n_j \cdot x) \Big)^{p/2} dx \nonumber \\
&= \int_{[0,1]^d} \sum_{\ell, m \geq 0} \binom{p/2}{\ell} \binom{p/2}{m} \Big( \sum_{i=0}^d b_i e(n_i \cdot x) \Big)^{\ell} \Big( \sum_{j=0}^d b_j e(-n_j \cdot x) \Big)^{m} dx \nonumber \\
&= \sum_{\beta, \gamma \in \mathbb{Z}_{\geq 0}^{d+1}}  \binom{p/2}{|\beta|} \binom{p/2}{|\gamma|} \binom{|\beta|}{\beta} \binom{|\gamma|}{\gamma} b^{\beta +  \gamma} I(\beta-\gamma), \label{Taylor}
\end{align}
in which $b^{\be+\ga} = b_0^{\be_0+\ga_0}\cdots b_d^{\be_d+\ga_d}$, and according to our fixed $n_0,n_1,\ldots, n_d \in \Z^d$ we define $I(u)$ for any  $u = (u_0,\ldots, u_d) \in \R^{d+1}$ by
\[
I(u) := \int_{[0,1]^d} e \Big(\sum_{i=0}^d u_i n_i \cdot x \Big) dx.
\]
The above application of the expansion of the $p/2$ power and the interchange of the infinite sum over $\ell, m$ with the integral can be justified as long as $|b| < 1$ (because this guarantees uniform convergence, over $[0,1]^d$, of the series under the integral on the second line of the display).

Our goal is to isolate a main term in the right-hand side of (\ref{Taylor}), plus a negligible remainder term. Then we aim to show that when we substitute in $a_{n_i}$ for $b_i$ in the main term, we get a strictly larger expression than when we substitute in $|a_{n_i}|$. We will do so by showing that the difference between the expression with $a_{n_i}$ minus the expression with $|a_{n_i}|$ is controlled by a signed product of generalized binomial coefficients (see (\ref{pc})). We will exploit the fact that the generalized binomial coefficients oscillate in sign in order to show that this difference can be forced to be positive in the settings of our theorems. This basic framework agrees with the argument used by Bennett and Bez in the case of the parabola, but  how we isolate the main term, and how we show the difference can be forced to be positive, is novel.

The integral $I(\beta-\gamma)$ is equal to 1 if and only if $\beta,\gamma$ satisfy
 \beq\label{bega}
 \sum_{i=0}^d (\beta_i - \gamma_i) n_i  = \zerobf \in \Z^{d},
 \eeq
and is equal to 0 otherwise.
We now characterize for which $\be,\ga \in \Z_{\geq 0}^{d+1}$ the relation (\ref{bega}) holds. Certainly (\ref{bega}) holds if $\be = \ga$. 
In general we  will show via linear algebraic considerations that $I(\beta - \gamma) = 1$  if and only if $\beta - \gamma$ is an integer multiple of $(c_0, \dots, c_d)$, for a   vector $c \in \Z^{d+1}$ we now construct.

We recall that $n_0,\ldots, n_d \in \Z^d$ (regarded as column vectors) are fixed. Consider the
 the linear functional 
\beq\label{1v_generic}
(x_0, \dots, x_d) \in \Q^{d+1} \mapsto \det\left( 
\begin{array}{cccc}
x_0 & x_1 & \dots & x_d \\
\vert & \vert &  & \vert \\
n_0 & n_1 & \dots & n_d  \\
\vert & \vert &  & \vert \\
\end{array} 
\right) \in \Q,
\eeq 
which for some  fixed $v = (v_0, \dots, v_d) \in \Z^{d+1}$ can be expressed as $(x_0,\ldots, x_d) \cdot (v_0,\ldots, v_d)$ for all $(x_0,\dots,x_d) \in \Z^{d+1}$. 
First, note that $v$ (regarded as a column vector) is in the null-space of the $d \times (d+1)$ matrix $(n_0,\ldots, n_d).$ (Indeed, for each $1 \leq i \leq d,$ the $i$-th coordinate of $(n_0,\ldots, n_d)v$ is the value of the linear functional with $x$ taken to be identical to the $i$-th row of the matrix in (\ref{1v_generic}), so that the determinant is certainly zero.)
Second, note that
\beq\label{affine_cond}
(1,\dots,1) \cdot (v_0, \dots, v_d) = \det(\widetilde{n_0}, \widetilde{n_1},  \ldots, \widetilde{n_d})  = \det (n_1 - n_0, \ldots,n_d -n_0).
\eeq
We now \emph{assume} that the determinant above is nonzero, and proceed under this assumption. Then in particular we see that $v \neq \mathbf{0} \in \Z^{d+1}$ and that 
 the 
\beq\label{rank}
 \text{$d \times (d+1)$ matrix $( n_0, \ldots, n_d)$ has rank $d$.} 
 \eeq
 Consequently the null-space of $( n_0, \ldots, n_d)$ is $1$-dimensional, and hence the vector $v$ constructed above spans the null space.

We have learned that $I(u)=1$ for $u \in \R^{d+1}$ if and only if $u$ is in the null-space of the matrix $(n_0,\ldots, n_d)$, which is if and only if $u$ is a multiple of $v$. We are interested only in  $u \in \Z^d$ and thus it is efficient to find a primitive basis element for the null space, that is, with relatively prime entries. 
Thus we
let $D$ denote $\gcd(v_0, \dots, v_d)$, and finally define 
\beq\label{c_dfn}
c= (c_0, \dots, c_d) := D^{-1} (v_0, \dots, v_d)  \in \Z^{d+1}\setminus{\zerobf} .
\eeq
Always under the assumption the determinant in (\ref{affine_cond}) is nonzero, we conclude that  for $\be, \ga \in \Z_{\geq 0}^{d+1}$, $I(\be - \ga) =1$ if and only if $\be - \ga$ is an integral multiple  of $c$, say $kc$ for $k\in \Z$. 

The case $k=0$ corresponds to $\be = \ga$. The contribution to the Taylor expansion (\ref{Taylor}) from the terms with $\be = \ga$ is 
\[ \sum_{ \beta \in \mathbb{Z}_{\geq 0}^{d+1} }  \binom{p/2}{|\beta|}^2 \binom{|\beta|}{\beta}^2 (b^{\beta})^2  .\]
Because each entry is squared in this contribution, if we substitute in the coefficients $a_i$ or $|a_i|$ for $b_i$, the expression is identical, leading to an identical contribution to the $L^p([0,1]^d)$ norms we are studying. 

Thus to understand when the   majorant property is violated, we aim to isolate the contribution of  lowest-order terms in the expansion (\ref{Taylor}) with $\be \neq \ga$; that is, the contribution from $\be, \ga$ such that $\be - \ga = kc$ for some integer $k \neq 0$, such that $\be + \ga$ has smallest total degree. For any $k$ with $|k | \geq 1$,  if $\be - \ga = kc$, the triangle inequality shows that for each $0 \leq i \leq d$, 
\[ |c_i| \leq |k| |c_i| = |kc_i| = |\be_i - \ga_i| \leq |\be_i| + |\ga_i| = \be_i + \ga_i,\]
since $\be_i, \ga_i \geq 0$. We deduce that \emph{as a polynomial}, $x_0^{|c_0|} x_1^{|c_1|} \cdots x_d^{|c_d|}$ is a divisor of $x^{\be + \ga}$ for every $\be,\ga$ such that $\be - \ga = kc$ for some integer $k \neq 0$. In particular the  lowest-order terms with $\be \neq \ga$ must come from $\be, \ga$ such that $\be + \ga = (|c_0|,  |c_1| ,\ldots, |c_d|)$. Observe that since $c \neq \zerobf \in \Z^{d+1}$, in order to simultaneously satisfy for some integer $k \neq 0$ the three conditions that 
\[ \be + \ga = (|c_0|,  |c_1| ,\ldots, |c_d|), \qquad \be - \ga = k(c_0,\ldots, c_d), \qquad \be_i, \ga_i \geq 0 ,\]
it must be the case that $k = \pm 1$. Then for example, if $k=1$, we must have that $\be_i = c_i$ and $\ga_i=0$ for those $i$ such that $c_i>0$, and $\be_i = 0$ and $\ga_i = -c_i$ for those $i$ such that $c_i< 0$, and finally $\be_i  = \ga_i  =0$ if $c_i=0$. An analogous conclusion is obtained if $k = -1$.

At this point, it is helpful to define the notation
\beq\label{c_notation}
c = c_+ - c_-
\eeq
where $c_+, c_- \in \Z^{d+1}$ are defined by
\[
(c_+)_i := \begin{cases} c_i &\quad \text{if $c_i \geq 0$} \\ 0 & \quad \text{if $c_i < 0$} \end{cases} 
\quad \text{and} \quad
(c_-)_i := \begin{cases} 0 &\quad \text{if $c_i  \geq 0$} \\ -c_i & \quad \text{if $c_i < 0$} \end{cases} .
\]
Then $(|c_0|,  |c_1| ,\ldots, |c_d|) = c_- + c_+$. Note that $c, c_+$ and $c_-$ are used as multi-indices, so that in what follows, $|c|$ denotes the order $|c|=|c_0| + \cdots +|c_d|$, and similarly for the other two.
(At this point we note that at the level of generality of the present discussion, it could be that one of $c_+$ or $c_-$ is the zero vector, but it cannot be that both are the zero vector.)

We apply this 
in  \eqref{Taylor}, distinguishing between the cases $\beta = \gamma$ and those where $\beta \ne \gamma$, and isolating the lowest-order terms characterized above:
\begin{align*}
\Big\|  1 +  \sum_{i=0}^d b_i e(n_i \cdot x) & \Big\|_{L^p([0,1]^d)}^p
= \, \sum_{ \beta \in \mathbb{Z}_{\geq 0}^{d+1} }  \Big[ \binom{p/2}{|\beta|} \binom{|\beta|}{\beta} b^{\beta} \Big]^2 \\
& + 2 \binom{p/2}{|c_-|} \binom{p/2}{|c_+|} \binom{|c_-|}{c_-} \binom{|c_+|}{c_+} b^{c_-+c_+} + o(|b^{c_- + c_+}|).
\end{align*}
Here we have now made the assumption that each coefficient $b_i$ has $|b_i|<1$, so   that the higher order terms, which we noted above are divisible by $b^{c_-+c_+}$ (as a polynomial), are indeed $o(|b^{c_- + c_+}|)$. 

We now apply this with two choices for the coefficients $b_i$: first, real coefficients $a_{n_i}$, and second, the absolute values $|a_{n_i}|$, and then we take the difference. 
We summarize the discussion thus far as a proposition.

\begin{prop}\label{prop_summary}
Suppose that $n_0,\ldots, n_d \in \Z^d$ (regarded as column vectors) have the property that  
\beq \label{affine_cond_sum}
\det(\widetilde{n_0}, \widetilde{n_1},  \ldots, \widetilde{n_d}) \neq 0.
\eeq
Then there exists $c \in \Z^{d+1}\setminus\{0\}$ with $\gcd (c_0,\ldots, c_d)=1$   with corresponding notation $c=c_+ -c_-$ with $c_-,c_+ \in \Z_{\geq 0}^{d+1}$ as in (\ref{c_notation})  such that for any $p \not\in 2\N$, and any real coefficients $a=(a_{n_0},\ldots, a_{n_d}) \in \R^{d+1}$ with  $|a| < 1$, 
\begin{multline}\label{main_id}
 \Big\| 1 + \sum_{i=0}^d a_{n_i} e(n_i \cdot x) \Big\|_{L^p([0,1]^d)}^p - \Big\| 1 + \sum_{i=0}^d |a_{n_i}| e(n_i \cdot x) \Big\|_{L^p([0,1]^d)}^p \\
= \, -2\binom{p/2}{|c_-|} \binom{p/2}{|c_+|} \binom{|c_-|}{c_-} \binom{|c_+|}{c_+} (|a^{c_-+c_+}| - a^{c_-+c_+}) + o(|a^{c_- + c_+}|).
\end{multline}

\end{prop}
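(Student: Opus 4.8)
The plan is to assemble the proposition out of the linear-algebra construction and the Taylor expansion already developed in the preceding discussion, so the proof is largely organizational; the one place that demands care is the remainder estimate.

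\textbf{Constructing $c$.} First I would rerun the construction preceding the proposition. The linear functional in \eqref{1v_generic} is $x\mapsto x\cdot v$ for a unique $v\in\Z^{d+1}$; reading off the rows of the $d\times(d+1)$ matrix $M=(n_0,\dots,n_d)$ shows $Mv=\zerobf$, while \eqref{affine_cond} gives $(1,\dots,1)\cdot v=\det(\widetilde{n_0},\dots,\widetilde{n_d})\neq 0$ by hypothesis \eqref{affine_cond_sum}, so $v\neq\zerobf$. Since $(\widetilde{n_0},\dots,\widetilde{n_d})$ is invertible, deleting its top row of $1$'s leaves $M$ with rank $\geq d$, hence rank exactly $d$, so its null space is the line $\Q v$. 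Setting $D=\gcd(v_0,\dots,v_d)$ and $c=D^{-1}v$ produces a primitive vector with $\Z^{d+1}\cap\Q v=\Z c$; define $c_\pm$ as in \eqref{c_notation}.

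\textbf{Expanding and isolating the main term.} For $(b_0,\dots,b_d)$ with $|b_0|+\dots+|b_d|<1$ the generalized binomial series for $(1+\sum_i b_i e(\pm n_i\cdot x))^{p/2}$ converge uniformly on $[0,1]^d$, so I may expand $\|1+\sum_i b_i e(n_i\cdot x)\|_{L^p([0,1]^d)}^p$ term by term to get \eqref{Taylor}. Since $\beta-\gamma\in\Z^{d+1}$, the integral $I(\beta-\gamma)$ equals $1$ exactly when $\sum_i(\beta_i-\gamma_i)n_i=\zerobf$, i.e. when $\beta-\gamma\in\Z^{d+1}\cap\Q v=\Z c$, say $\beta-\gamma=kc$, and equals $0$ otherwise. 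The terms with $k=0$ are those with $\beta=\gamma$, and their total contribution $\sum_{\beta}\big[\binom{p/2}{|\beta|}\binom{|\beta|}{\beta}\big]^2 b^{2\beta}$ is a sum of squares of monomials, hence unchanged under $b_i\mapsto|b_i|$. For $k\neq 0$ the bound $\beta_i+\gamma_i\geq|k|\,|c_i|\geq|c_i|$ shows $x^{c_-+c_+}$ divides $x^{\beta+\gamma}$ as polynomials; equating total degrees forces $|k|=1$ and pins down $(\beta,\gamma)$ to be $(c_+,c_-)$ when $k=1$ and $(c_-,c_+)$ when $k=-1$. These two pairs are distinct (since $c\neq\zerobf$, even when one of $c_\pm$ vanishes) and contribute equal terms, summing to $2\binom{p/2}{|c_-|}\binom{p/2}{|c_+|}\binom{|c_-|}{c_-}\binom{|c_+|}{c_+}\,b^{c_-+c_+}$.

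\textbf{Remainder and subtraction.} Every remaining term ($\beta\neq\gamma$, $|\beta+\gamma|>|c_-|+|c_+|$) equals $b^{c_-+c_+}$ times a monomial of positive degree; since $\sum_{\beta,\gamma}\big|\binom{p/2}{|\beta|}\binom{p/2}{|\gamma|}\big|\binom{|\beta|}{\beta}\binom{|\gamma|}{\gamma}|b|^{|\beta+\gamma|}=\sum_{\ell,m\geq0}\big|\binom{p/2}{\ell}\binom{p/2}{m}\big|\big(\sum_i|b_i|\big)^{\ell+m}$ converges for $\sum_i|b_i|<1$, I can factor $b^{c_-+c_+}$ out of this tail and bound what is left by a quantity tending to $0$ as $b\to\zerobf$, so the tail is $o(|b^{c_-+c_+}|)$. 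Plugging in $b_i=a_{n_i}$ and then $b_i=|a_{n_i}|$ (valid once $|a|$ is small), the sum-of-squares parts agree, the main terms differ by $2\binom{p/2}{|c_-|}\binom{p/2}{|c_+|}\binom{|c_-|}{c_-}\binom{|c_+|}{c_+}\,(a^{c_-+c_+}-|a^{c_-+c_+}|)$ (using $|a^{c_-+c_+}|=(|a|)^{c_-+c_+}$), and each remainder is $o(|a^{c_-+c_+}|)$; subtracting gives \eqref{main_id}. The main obstacle, modest here, is exactly this remainder bookkeeping: one must check that \emph{every} surviving higher-order term is divisible by $b^{c_-+c_+}$ as a polynomial (the triangle-inequality estimate does this for all $k\neq0$ at once) and that the infinite tail, not just each term, is $o(|b^{c_-+c_+}|)$, for which the absolute convergence of the binomial double series when $\sum_i|b_i|<1$ is the key input; a minor point is to track the degenerate cases $c_-=\zerobf$ or $c_+=\zerobf$ (not both), where one of the four binomial factors equals $1$ but the two lowest-order pairs remain distinct.
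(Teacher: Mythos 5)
Your proposal is correct and follows essentially the same route as the paper: the determinant functional \eqref{1v_generic} giving $v$ in the null space of $(n_0,\ldots,n_d)$, normalization to the primitive vector $c$, the term-by-term binomial expansion \eqref{Taylor} with $I(\beta-\gamma)=1$ iff $\beta-\gamma\in\Z c$, cancellation of the $\beta=\gamma$ part, isolation of the $k=\pm1$ lowest-order terms at $\beta+\gamma=c_-+c_+$, and the $o(|a^{c_-+c_+}|)$ remainder. Your handling of the tail via absolute convergence of the double binomial series for $\sum_i|b_i|<1$ is at least as explicit as the paper's brief justification, so nothing is missing.
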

 
Now,  in order to prove a set $\{0, n_0, \dots, n_d\} \subset \Z^d$ violates the   majorant property, we     verify that the main term on the right-hand side of (\ref{main_id}) is \emph{positive}, and then take the coefficients $a_i$ sufficiently small so that the $o(|a^{c_- + c_+}|)$ remainder term is dominated by the main term. 
The multinomial coefficients $\binom{|c_-|}{c_-}, \binom{|c_+|}{c_+}$ are positive integers. Moreover, we claim that for $c$ as constructed above, we can (for example) choose   a small $a \in \R^{d+1}$ such that 
\beq\label{ac}
|a^{c_-+c_+}| - a^{c_-+c_+} = 2|a^{c_-+c_+}| > 0.
\eeq
This is because since $c \neq \zerobf \in \Z^{d+1}$ and $\gcd(c_0,\ldots, c_d)=1$,  at least one $c_i$ is odd. 
Thus we simply choose $a$ so that $a_{n_i}<0$ for one index such that $c_i$ is odd, and then choose $a_{n_i}>0$ for all other indices.   
Then to show that the main term on the right-hand side is positive, it suffices to show that 
  for the vector $c$ obtained from the set $\{n_0, \dots, n_d\}$, and the $p \not\in 2\N$ of interest, 
\beq\label{pc}
- \binom{p/2}{|c_-|} \binom{p/2}{|c_+|} > 0,
\eeq
or equivalently,
\beq\label{pcnew}
\begin{cases}
\left(\frac{p}{2} - |c_+|\right) \left(\frac{p}{2} - |c_+| - 1 \right) \cdots \left(\frac{p}{2} - (|c_-| - 1)\right) < 0 \quad \text{if $|c_+| \leq |c_-|$} 
\\
\left(\frac{p}{2} - |c_-|\right) \left(\frac{p}{2} - |c_-| - 1 \right) \cdots \left(\frac{p}{2} - (|c_+| - 1)\right) < 0 \quad \text{if $|c_+| > |c_-|$}.
\end{cases}
\eeq
To prove this, we aim to verify one of the following sufficient conditions:
\begin{enumerate}
\item[(i)] (for Theorems~\ref{thm_affine_indep} and \ref{thm_affine_abun}) one of the two numbers $p/2-(|c_-|-1)$ and $p/2-(|c_+|-1)$ is positive, and the other is between $0$ and $-1$ (so exactly one of the terms in the product in \eqref{pcnew} is negative);
\item[(ii)] (for Theorem \ref{thm_moment_curve}) $|c_-|$ and $|c_+|$ are both bigger than $p/2$, and    they have opposite parities (so that \eqref{pcnew} is a product of an odd number of negative numbers).
\end{enumerate}

\section{Theorem \ref{thm_affine_indep}: the affinely independent case}
For Theorem \ref{thm_affine_indep},  the hypotheses allow us to
choose $n_{\bullet} \in \Ga$ and an affinely independent subset $\{n_0, \ldots, n_d\} \subset \Ga \setminus \{n_{\bullet}\}$ with $d+1$ elements, so that \eqref{affine_cond_sum} holds. 
We need only show that $\{n_{\bullet},n_0,\dots,n_d\}$ violates the strict majorant property on $L^p([0,1]^d)$ for some values of $p$. Set $a_{n_{\bullet}} = 1$. We will choose small real numbers $a_{n_0}, \dots, a_{n_d}$ and some integer $m \geq 0$ so that 
\[
 \Big\| e(n_{\bullet} \cdot x) + \sum_{i=0}^d a_{n_i} e(n_i \cdot x) \Big\|_{L^p([0,1]^d)} > \Big\| e(n_{\bullet} \cdot x) + \sum_{i=0}^d |a_{n_i}| e(n_i \cdot x) \Big\|_{L^p([0,1]^d)}
\] 
for all $p \in (2m,2m+2)$.
Since $|e(n_{\bullet} \cdot x)| = 1$, by considering the translate $n_i - n_{\bullet}$ in place of $n_i$ for every $i = 0, \dots, d$, without loss of generality we may assume $n_{\bullet} = \zerobf$ so that $e(n_{\bullet} \cdot x) = 1$. We then appeal to Proposition~\ref{prop_summary} with the  $n_0,n_1,\ldots, n_d \in \Z^d \setminus \zerobf$ chosen in this way.

For such $n_0,n_1,\ldots, n_d$, we construct  $c \in \Z^{d+1}$ with $\gcd(c_0,\ldots, c_d)=1$ as in (\ref{c_dfn}) above, and focus attention on $a = (a_{n_0}, \dots, a_{n_d})\in \R^{d+1}$ such that (\ref{ac}) holds (which, again, is possible since at least one entry in $c$ is odd). 
In this case (\ref{affine_cond}) shows that
 \[
 c_0 + \dots + c_d = D^{-1} (1, \dots, 1) \cdot (v_0, \dots, v_d) = D^{-1} 
\det(\widetilde{n_0}, \widetilde{n_1}, \dots, \widetilde{n_d}) \ne 0.
 \]
It follows that
$
|c_+| \ne |c_-|,
$
for otherwise $c_0 + \dots + c_d = 0$. Thus $|c_+|$ and $|c_-|$ are distinct non-negative integers. 

To proceed further we claim that
\begin{equation}\label{lower_bd_2}
\max\{|c_-|,|c_+|\} \geq 2.
\end{equation}
Suppose on the contrary that this is not true, so that $\{|c_-|,|c_+|\} = \{0,1\}$.  Then there exists precisely one index $i_0$ with a nonzero entry $c_{i_0} = \pm 1$, and all other coordinates of $c$ are zero. Recall the construction of $c = D^{-1}v$ where $v$ is in the nullspace of the $d \times (d+1)$ matrix $M=(n_0,n_1, \ldots, n_d)$;  in particular, this means $v$ is orthogonal to all the rows $N_1, \ldots, N_d$ of $M$. Consequently, if $c$ has precisely one nonzero entry $c_{i_0} = \pm 1$, this implies that all the rows $N_1, \ldots, N_d \in \R^{d+1}$ are orthogonal to the $i_0$-th coordinate vector in $\R^{d+1}$. In particular, each $N_i$ has a zero in its $i_0$-th coordinate, and  the $i_0$-th column vector in $M$, namely $n_{i_0},$ is the zero vector.   This contradicts our initial hypotheses on $n_0, n_1,\ldots, n_d$.
As a result, we may conclude that $\max\{|c_-|,|c_+|\} \geq 2$, as claimed. 

Now we define
\[
m_+ := \max\{|c_-|,|c_+|\} \quad \text{and} \quad m_- := \min\{|c_-|,|c_+|\};
\] 
hence $m_+ \geq 2$ and $m_+>m_-$.
Choose any $p$ such that 
\beq\label{p_choice}
m_- \leq m_+ - 1 < \frac{p}{2} + 1 < m_+,
\eeq
or equivalently $2m_+-4<p<2m_+-2;$
note $p>0$ since $m_+ \geq 2$. 
Then 
\[
\binom{p/2}{m_+} = \frac{1}{m_+ !} \frac{p}{2} \left(\frac{p}{2} -1 \right) \cdots \left(\frac{p}{2} - (m_+ - 1)\right) < 0,
\]
in which the last factor is negative but all other factors are positive. On the other hand $\frac{p}{2} + 1 > m_-$, so that
\[
\binom{p/2}{m_-} = \frac{1}{m_- !} \frac{p}{2} \left(\frac{p}{2} -1 \right) \cdots \left(\frac{p}{2} - (m_- - 1)\right) > 0.
\] 
This verifies the crucial property (\ref{pc}). 
Finally, we choose $a$ so that (\ref{ac}) holds and $|a|$ is sufficiently small that the error $o(|a^{c_- + c_+}|)$ in Proposition \ref{prop_summary} can be dominated by the main term on the right-hand side. This shows 
\[
 \Big\| 1 + \sum_{i=0}^d a_{n_i} e(n_i \cdot x) \Big\|_{L^p([0,1]^d)} > \Big\| 1 + \sum_{i=0}^d |a_{n_i}| e(n_i \cdot x) \Big\|_{L^p([0,1]^d)},
\] 
and concludes the proof of Theorem \ref{thm_affine_indep}.\\

Note that within this argument, once the set   of affinely independent vectors is fixed, the vector $c$ is fixed, so that $m_+, m_-$ are bounded. Consequently there is an upper bound on the $p$ for which we can verify the crucial property (\ref{pc}), and hence an upper bound on the $p$ for which this method can show the strict majorant property can be violated.

If we instead consider an affinely abundant set of integers, we have the freedom to choose infinitely many elements $n$ in the set we consider, each of which yields a corresponding pair $m_+^{(n)}, m_-^{(n)}$, and such that the corresponding $m_+^{(n)}$ form a strictly increasing sequence; this allows us to construct violations of the strict majorant property for  open intervals of arbitrarily large $p$. 
Similarly,  in the proof of Theorem \ref{thm_moment_curve}, we can take $p$ arbitrarily large by  shifting our focus to a set of points that are sufficiently ``high'' on  the moment curve.

\section{Theorem \ref{thm_affine_abun}: The affinely abundant case}

Recall from \S \ref{sec_affine_abundance} that a set of points $\Ga \subset {\mathbb Z}^d$ is called affinely abundant when there exists a $d$-tuple of points $n_1,\ldots,n_{d} \in \Ga$ such that the set
\beq\label{set_infinite}
\{ \det (\widetilde{n}, \widetilde{n_1}, \ldots, \widetilde{n_d}) \ : \ n \in \Ga \} \eeq
is infinite. Since the values in this set are integers, the values must then become arbitrarily large in absolute value.

Suppose that a set $\Gamma \subset \Z^d$ is affinely abundant, and denote a distinguished $d$-tuple in $\Ga$ with the above property by $\{n_1,\ldots, n_d\}$. 
Fix some $n_{\bullet} \in \Ga \setminus \{n_1, \ldots, n_d\}$. We will show that for infinitely many integers $m \ge 0$, we can choose $n_0 \in \Ga \setminus \{n_{\bullet}, n_1, \ldots, n_d\}$, such that $\{n_{\bullet},n_0,n_1,\dots,n_d\}$ violates the strict majorant property on $L^p([0,1]^d)$ for all $p \in (2m,2m+2)$. As in the previous section, without loss of generality we may  consider from now on $n_i - n_\bullet$ in place of $n_i$, and  $n_{\bullet} = \zerobf$. We will let $a_{n_{\bullet}}=1$. Upon choosing $n_0 \in \Ga \setminus \{\zerobf, n_1, \ldots, n_d\}$, we will choose small real numbers $a_{n_0},\dots,a_{n_d}$ and an appropriate integer $m \ge 0$ depending on $n_0,$ such that 
\[
 \Big\| 1 + \sum_{i=0}^d a_{n_i} e(n_i \cdot x) \Big\|_{L^p([0,1]^d)} > \Big\| 1 + \sum_{i=0}^d |a_{n_i}| e(n_i \cdot x) \Big\|_{L^p([0,1]^d)}
\] 
for all $p \in (2m,2m+2)$.
The key is to choose $n_0$ so that we can apply the analysis of \S \ref{sec_setup}, and finally to observe that the corresponding $m$   can be made arbitrarily large, by varying the choice of $n_0 \in \Ga.$

We will not yet specify which $n_0$ we choose to distinguish, but suppose momentarily that such a choice has been made and fix the set $\{n_0, n_1, \ldots, n_d\}$ of $d+1$ (nonzero) elements in $\Ga \subset \Z^d$. With this set, we follow the construction in \S \ref{sec_setup} and 
define $v=v^{(n_0)} = (v_0,\ldots, v_d) \in \Z^{d+1}$ so that 
\beq\label{dfn_v_affine_abun}
(1,\ldots, 1) \cdot (v_0,\ldots, v_d) = 
\det(\widetilde{n_0}, \widetilde{n_1}, \dots, \widetilde{n_d})
\eeq
as in (\ref{affine_cond}).
 Since $\Gamma$ is affinely abundant, there are infinitely many choices of $n_0 \in \Ga \setminus \{\zerobf \}$ such that this determinant is nonzero, giving $v^{(n_0)} \neq \mathbf{0} \in \Z^{d+1}$, and we assume that $n_0$ has this property. Moreover, since the set (\ref{set_infinite}) is infinite, we can choose a sequence of $n_0$ so that the   construction of $v^{(n_0)}$ yields a sequence of values for $|(1,\ldots,1) \cdot v^{(n_0)}|$ that grows arbitrarily large. 
 
 Now note that within $v^{(n_0)}$, by (\ref{1v_generic}) the coordinate $v_0$ can be computed as the minor of the matrix in (\ref{dfn_v_affine_abun}) that omits the first row and the first column; this minor is independent of $n_0$. In particular, the coordinate $v_0$ stays fixed independent of $n_0$, and thus the $\gcd$ of the coordinates  of $v^{(n_0)}$, namely 
 \[D^{(n_0)} = \gcd(v_0,v_1, \ldots, v_d),\]
 stays \emph{bounded} uniformly in $n_0 \in \Ga \setminus \{ \zerobf\}.$ 
 We now define the corresponding vector $c = c^{(n_0)} = (c_0,\ldots,c_d)$ as in 
(\ref{c_dfn}) by 
\[ c^{(n_0)} = (D^{(n_0)})^{-1} v^{(n_0)} \in \Z^{d+1} \setminus \{\zerobf\},\]
and   the vector $c_+^{(n_0)}$ recording the non-negative coordinates and the vector $c_-^{(n_0)}$ recording the negative coordinates as in (\ref{c_notation}).
Since 
\[c_0 + \dots + c_d = (D^{(n_0)})^{-1}(v_0 + \cdots +v_d) \neq 0\]
we again learn that $|c_+^{(n_0)}|$ and $|c_-^{(n_0)}|$ are distinct non-negative integers. 
Moreover, since $|(1,\ldots,1) \cdot v^{(n_0)}|$ can be made arbitrarily large by choosing $n_0$ from the affinely abundant set $\Ga \setminus \{\zerobf\}$, while $D^{(n_0)}$ is bounded uniformly for all $n_0 \in \Ga \setminus \{\zerobf\}$, then $|c^{(n_0)}|$ can be made arbitrarily large. 
Consequently, if for each $n_0 \in \Ga \setminus \{\zerobf\}$ we define 
\[ m_+^{(n_0)} := \max \{ |c_+^{(n_0)}|, |c_-^{(n_0)}|\} \quad \text{and} \quad m_-^{(n_0)} := \min \{ |c_+^{(n_0)}|, |c_-^{(n_0)}|\},\]
then the set of positive integers
$ \{m_+^{(n_0)} : n_0 \in \Ga \setminus \{\zerobf\}\}$
is infinite. In particular, there is an ordered infinite sequence of choices of $n_0 \in \Ga \setminus \{\zerobf\}$ for which the corresponding sequence of pairs $(m_+^{(n_0)},m_-^{(n_0)})$ has the properties that each $m_+^{(n_0)}\geq 2$,
each pair has $m_+^{(n_0)}>m_-^{(n_0)},$ and moreover the sequence of integers $m_+^{(n_0)}$ is strictly increasing.

Now for each choice of $n_0$, we apply Proposition \ref{prop_summary}  to the $d+1$ vectors $\{n_0,n_1,\ldots, n_d\}$. We choose the coefficients $a_n$ to be zero for all $n \in \Ga \setminus\{\zerobf,n_0,n_1,\ldots,n_d\}$.
After such a choice, we can conclude that
\[ \Big\| \sum_{n \in \Ga} a_n e(n \cdot x)\Big\|_{L^p([0,1]^d)} - 
\Big\| \sum_{n \in \Ga} |a_n| e(n \cdot x)\Big\|_{L^p([0,1]^d)}
\]
is equal to 
\[ \begin{split} -2\binom{p/2}{|c_-^{(n_0)}|} & \binom{p/2}{|c_+^{(n_0)}|} \binom{|c_-^{(n_0)}|}{c_-^{(n_0)}} \binom{|c_+^{(n_0)}|}{c_+^{(n_0)}} (|a^{c_-^{(n_0)}+c_+^{(n_0)}}| - a^{c_-^{(n_0)}+c_+^{(n_0)}}) \\ & + o(|a^{c_-^{(n_0)} + c_+^{(n_0)}}|) \end{split} \]
(where we wrote $a := (a_{n_0},\dots,a_{n_d})$).
We again  focus on choosing coefficients $a \in \R^{d+1}$ such that the factor in the first term that depends on the coefficients $a$ is positive (which, as mentioned earlier in (\ref{ac}), is possible since the gcd of the coordinates of $c^{(n_0)}$ is 1, so at least one entry in $c^{(n_0)}$ is odd). 
Thus the strict majorant property is violated for any $p$ such that 
\[-\binom{p/2}{|c_-^{(n_0)}|} \binom{p/2}{|c_+^{(n_0)}|}>0.\]
We  apply the argument   developed in \eqref{p_choice}: for each $n_0$, any $p$ with
\beq\label{p_choice2}
m_+^{(n_0)} - 1 < \frac{p}{2} + 1 < m_+^{(n_0)}
\eeq
has the property that the strict majorant property fails in $L^p([0,1]^d)$, since 
\[ \binom{p/2}{m_+^{(n_0)}}<0 \quad \text{yet} \quad  \binom{p/2}{m_-^{(n_0)}}>0.\]
Since we have obtained an infinite sequence of $n_0$ for which the corresponding values $m_+^{(n_0)}$ are arbitrarily large, this completes the proof of Theorem \ref{thm_affine_abun}. 

  \section{ Theorem \ref{thm_moment_curve}: the moment curve}
  For Theorem \ref{thm_moment_curve}, we let $\gamma(t) = (t,t^2,\ldots, t^d)$ parametrize the moment curve in $\R^d$. 
  We will take any $p >0$ and then choose $k$ depending on $p$ so that  the set $\Ga = \{\zerobf, \gamma(k), \gamma(k+1), \ldots, \gamma(k+d)\} \subset \Z^d$ violates the strict majorant property on $L^p([0,1]^d)$. We will again apply the analysis of \S \ref{sec_setup}, but since this time   we will apply the criterion (ii) to prove (\ref{pc}), we must do more explicit computations, which use the structure of the moment curve. 
  
  For the moment we suppose an integer $k\geq 1$ has been fixed. 
For each $0 \leq i \leq d$ define 
$n_i=\ga(k+i) \in \Z^d,$
regarded as a column vector. Accordingly, define the vector $v = (v_0,v_1,\ldots, v_d) \in \Z^{d+1}$ as before in (\ref{1v_generic}) and (\ref{affine_cond}).

\begin{lemma}\label{201212claim3_1} 
Fix an integer $k \geq 1$. Let $v = (v_0,v_1,\ldots, v_d) \in \Z^{d+1}$ be defined as above. If we define 
 \[ c = \frac{1}{d! (d-1)! \dots 1!} v ,\]
then $c \in \Z^{d+1}$  and 
\begin{equation}\label{201212e3_3}
c_0 + \dots + c_d
= 1,
\end{equation}
so   $|c| = |c_0| + \cdots + |c_d|$ is odd.
Also, $|c_i|\ge k^d/(d!)^2$ for every $0\le i\le d$. 
\end{lemma}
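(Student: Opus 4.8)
\textbf{Proof proposal for Lemma~\ref{201212claim3_1}.}

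The plan is to compute the coordinates $v_i$ of the nullspace vector explicitly by expanding the relevant determinants, using the Vandermonde structure of the moment curve. Recall from \eqref{1v_generic} that $v_i$ is (up to sign) the $d \times d$ minor of the matrix $(n_0, \ldots, n_d) = (\gamma(k), \ldots, \gamma(k+d))$ obtained by deleting the $i$-th column, augmented by a row of ones; that is, $v_i = (-1)^i \det(\widetilde{n_0}, \ldots, \widehat{\widetilde{n_i}}, \ldots, \widetilde{n_d})$. Each such determinant is a Vandermonde determinant in the nodes $\{k, k+1, \ldots, k+d\} \setminus \{k+i\}$, since $\widetilde{\gamma(t)} = (1, t, t^2, \ldots, t^d)$. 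Thus
\[
v_i = (-1)^i \prod_{\substack{0 \le a < b \le d \\ a, b \ne i}} \big((k+b) - (k+a)\big) = (-1)^i \prod_{\substack{0 \le a < b \le d \\ a, b \ne i}} (b - a).
\]
The key point is that this product does not depend on $k$ at all; only its sign and a combinatorial constant survive, and the $k$-dependence of the original problem has been entirely absorbed.

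Next I would evaluate the constant $\prod_{\substack{a<b,\, a,b \ne i}}(b-a)$. The full Vandermonde product $\prod_{0 \le a < b \le d}(b-a) = \prod_{j=1}^{d} j! $ (the superfactorial). Removing index $i$ deletes exactly the factors $(i - a)$ for $a < i$ and $(b - i)$ for $b > i$, whose product is $i! \,(d-i)!$. Hence $|v_i| = \dfrac{\prod_{j=1}^{d} j!}{i!\,(d-i)!}$, and in particular $v_i$ is divisible by $\prod_{j=1}^{d-1} j! = (d-1)!\,(d-2)!\cdots 1!$... wait, more carefully: $v_i / \big(d!\,(d-1)!\cdots 1!\big)$ should be shown to be an integer. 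Since $\prod_{j=1}^d j! = d! \cdot \prod_{j=1}^{d-1} j!$ and $i!(d-i)!$ divides $d!$ (it is $d!/\binom{d}{i}$), we get $c_i = (-1)^i \binom{d}{i} \big/ \big(\text{something}\big)$; I would instead directly verify $c_i = (-1)^i \binom{d}{i} \cdot \frac{d!\cdots 1!}{d!\cdots 1!}$-type identity showing $c_i = \pm \binom{d}{i}$ up to the normalization, so that $c \in \Z^{d+1}$. Then $c_0 + \cdots + c_d = \sum_i (-1)^i \binom{d}{i} \cdot(\text{const})$; the alternating binomial sum $\sum_i (-1)^i \binom{d}{i} = 0$ for $d \ge 1$, so I must instead track signs coming from the $(-1)^i$ in $v_i$ together with a possible overall orientation, and check that the signed sum telescopes to $1$ rather than $0$ — this is exactly the content of \eqref{affine_cond} specialized to consecutive moment-curve points, namely $(1,\ldots,1)\cdot v = \det(\widetilde{\gamma(k)},\ldots,\widetilde{\gamma(k+d)}) = \prod_{0 \le a < b \le d}(b-a) = \prod_{j=1}^d j!$, so after dividing by $d!(d-1)!\cdots 1! = \prod_{j=1}^d j!$ we get exactly $1$. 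This simultaneously proves $c \in \Z^{d+1}$ (each $c_i$ is an integer by the divisibility above) and \eqref{201212e3_3}. The parity claim that $|c|$ is odd is then immediate: $|c| \equiv c_0 + \cdots + c_d = 1 \pmod 2$ since each $|c_i| \equiv c_i \pmod 2$.

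Finally, for the lower bound $|c_i| \ge k^d/(d!)^2$: here I must go back and \emph{not} cancel the $k$-dependence prematurely — wait, the $v_i$ genuinely have no $k$-dependence, so this bound cannot be about $v_i$. Re-reading: the bound must concern a different quantity, or $k$ enters elsewhere. The resolution is that the inequality should read in terms of the determinant $\det(\widetilde{n},\widetilde{n_1},\ldots,\widetilde{n_d})$ for the chosen distinguished tuple, OR $n_i$ here denotes $\gamma(k+i)$ whose \emph{entries} (not the minors) are of size $\asymp k^d$; I expect the intended statement is a bound on the magnitude of entries of $n_i = \gamma(k+i)$, namely each coordinate $(k+i)^j \ge k^d/(d!)^2$ fails for small $j$... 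The honest reading: this $|c_i|$ bound is what is actually needed downstream to push $p$ large, so I would prove whichever of these the subsequent argument uses, most likely via the estimate $\prod_{\substack{a<b,\,a,b\ne i}}(b-a) \ge 1$ combined with a factor I have mislocated; the cleanest route is to observe that in the relevant application one translates by $n_\bullet = \gamma(k+j)$ for some $j$, and the \emph{new} nullspace vector does pick up $k$-dependence of order $k^d$, bounded below by crude estimates on the resulting Vandermonde-type minors.

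\textbf{Main obstacle.} The genuinely delicate step is the last one: correctly identifying which vector the bound $|c_i| \ge k^d/(d!)^2$ refers to and extracting the $k^d$ growth, since the raw nullspace minors of consecutive moment-curve points are $k$-independent. I expect this requires either incorporating the translation by $n_\bullet$ used in \S\ref{sec_setup} (so that the entries $n_i$ themselves grow like $k^d$) or a more careful bookkeeping of the determinant in \eqref{affine_cond} when one of the points is the origin, where expanding the $(d+1)\times(d+1)$ determinant along the row of entries of size $\asymp k^d$ yields the claimed lower bound with $(d!)^2$ accounting for the two factorial normalizations.
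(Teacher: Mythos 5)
Your explicit formula for the minors is wrong, and this single error is what produces the spurious $k$-independence that derails the rest of your argument. The coordinate $v_i$ arises from expanding the determinant in \eqref{1v_generic} along its top row, so $(-1)^i v_i$ is the $d\times d$ determinant of the columns $n_j=\gamma(k+j)=(k+j,(k+j)^2,\ldots,(k+j)^d)$ for $j\ne i$; the powers here run from $1$ to $d$, not from $0$ to $d-1$, so this is \emph{not} a pure Vandermonde determinant in the nodes $\{k+j : j\ne i\}$. One must first factor $(k+j)$ out of each remaining column, which gives $(-1)^i v_i=\bigl(\prod_{j\ne i}(k+j)\bigr)\,\mathbf{D}_i$, where $\mathbf{D}_i$ is the genuine Vandermonde determinant on those $d$ nodes, $\mathbf{D}_i=\prod_{j'<j,\ j',j\ne i}(j-j')$. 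Your formula $v_i=(-1)^i\prod_{a<b,\ a,b\ne i}(b-a)$ omits the factor $\prod_{j\ne i}(k+j)\asymp k^d$. This omission is fatal in three ways: your values are inconsistent with your own (correct) evaluation $(1,\ldots,1)\cdot v=\prod_{j=1}^d j!$, since they would sum to a multiple of $\sum_i(-1)^i\binom{d}{i}=0$; they do not yield integrality of $c$, since they give $c_i=(-1)^i/\bigl(i!\,(d-i)!\bigr)$; and, as you yourself flag in your ``main obstacle,'' they make the lower bound $|c_i|\ge k^d/(d!)^2$ unprovable. That bound really is about this very vector $v$ — no translation by $n_\bullet$, reinterpretation, or auxiliary determinant is needed; the $k$-dependence you lost lives in the factored-out column entries.

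The paper's proof is exactly the corrected version of your plan: write $(-1)^i v_i=\bigl(k\cdots(k+i-1)(k+i+1)\cdots(k+d)\bigr)\mathbf{D}_i$, evaluate $\mathbf{D}_i$ by the Vandermonde identity, and simplify to $v_i=(-1)^i\binom{k+i-1}{i}\binom{k+d}{d-i}\,d!\,(d-1)!\cdots 1!$, hence $c_i=(-1)^i\frac{k\cdots(k+i-1)}{i!}\cdot\frac{(k+i+1)\cdots(k+d)}{(d-i)!}$ (with the analogous formula $c_0=\binom{k+d}{d}$ for $i=0$). Integrality of $c$ is then immediate; the $d$ factors in the numerator are each at least $k$ while $i!\,(d-i)!\le (d!)^2$, giving $|c_i|\ge k^d/(d!)^2$; and \eqref{201212e3_3} follows from the full $(d+1)\times(d+1)$ Vandermonde evaluation of \eqref{affine_cond}, which is the one part of your computation that is correct (as is the parity deduction from it). To repair your write-up, you need only insert the column-factoring step and redo the bookkeeping; the hand-waving about $c_i=\pm\binom{d}{i}$ and about relocating the $k^d$ growth should be deleted.
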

Assuming this lemma, we prove Theorem \ref{thm_moment_curve}.
 We  claim that for any $p >0$, $p \not\in 2\N$, there exists a choice of $k$ such that for $v$ as defined above, and hence for $c = c_+ - c_-$ (with $c_+,c_-$ defined as in (\ref{c_notation})), the crucial relation (\ref{pc}) holds. 
 Since $|c| = |c_+| + |c_-|$ is odd, then  $|c_-|$ and $|c_+|$   have opposite parity. Thus in order to confirm that (\ref{pc}) holds, it only remains to show that given any such $p$ we can take $k$ sufficiently large so that the vector $c$ constructed above leads to     $|c_-|, |c_+| > p/2$. This follows immediately from the last statement in Lemma \ref{201212claim3_1}. 
This verifies (\ref{pc}) for the set of points $\{n_0,n_1,\ldots,n_d\}= \{\gamma(k), \gamma(k+1), \ldots, \gamma(k+d)\}$. 
Finally, we choose $a$ so that (\ref{ac}) holds and $|a|$ is sufficiently small that the error $o(|a^{c_- + c_+}|)$ in Proposition \ref{prop_summary} can be dominated by the main term on the right-hand side. (There is a positive measure set of such $a$.) This proves Theorem \ref{thm_moment_curve}.
\subsection{Proof of Lemma \ref{201212claim3_1}}
We will first show that 
\beq\label{v_sum_moment}
v_0+v_1 + \cdots +v_d = d! (d-1)! \cdots 1!.
\eeq
Then  we will compute each coordinate $v_i$, and show that 
\beq\label{v_divis}
(d! (d-1)! \cdots 1!) | v_i \qquad \text{for each $0 \leq i \leq d$.}
\eeq
Consequently  $c$ is an integer vector as claimed, and from (\ref{v_sum_moment}) we immediately see that (\ref{201212e3_3}) holds. By Bezout's identity, (\ref{201212e3_3})  implies that $\gcd(c_0,\ldots,c_d)=1$ so we can incidentally conclude that 
$
\gcd(v_0, \dots, v_d)= d! (d-1)! \cdots 1!.
$
Thus with this definition of $v$ and $c$ we can apply all the analysis of \S \ref{sec_setup}. Moreover, by (\ref{201212e3_3}),  $|c| = |c_0| + |c_1| + \cdots + |c_d|$ must be odd, since for each $i$, $c_i$ and $|c_i|$  have the same parity. We will prove the last claim of the lemma by examining the expression we prove for each $c_i.$ 

We now prove (\ref{v_sum_moment}). Recall the expression for $(1,1,\ldots,1) \cdot (v_0,v_1,\ldots,v_d)$ in (\ref{affine_cond}).
By the standard Vandermonde determinant identity, we claim that $v_0 + v_1 + \cdots + v_d$ must equal
  \beq\label{claim_v_moment}
  \det \left( 
\begin{array}{cccc}
1 & 1 & \dots & 1 \\
k & k+1 & \dots & k+d \\
\vdots & & & \vdots \\
k^d & (k+1)^d & \dots & (k+d)^d 
\end{array}
\right) = d! (d-1)! \cdots 1! .
\eeq
Indeed, by the Vandermonde  identity    the  $(d+1)\times (d+1)$ determinant is 
\[
\prod_{0 \leq j' < j \leq d} ( (k+j) - (k-j')) = \prod_{j=1}^d \left( \prod_{j'=0}^{j-1} (j-j') \right) = \prod_{j=1}^d j!
\]
as claimed.

Now we derive an expression for each coordinate $v_i$ individually, using the definition for $v$ given in (\ref{1v_generic}), so that $v_i$ is obtained from the minor of the $(d+1)\times (d+1)$ matrix $(\widetilde{n_0},\widetilde{n_1},\ldots,\widetilde{n_d})$ that omits the top row and the $i$-th column, numbering the columns from $0$ to $d$. For each $0 \leq i \leq d$, we see that $(-1)^i v_i$ is precisely 
\begin{multline*}
 \det \left(
\begin{array}{cccccc}
\gamma(k) & \cdots & \gamma(k+i-1) & \gamma(k+i+1) & \dots & \gamma(k+d)
\end{array}
\right) \\
= k \cdots (k+i-1) (k+i+1) \dots (k+d)
\mathbf{D}_i
\end{multline*}
where 
\[ \mathbf{D}_i=
\det \left(
\begin{array}{cccccc}
1 & \dots & 1 & 1 & \cdots & 1 \\
k & \dots & k+i-1 & k+i+1 & \cdots & k+d \\
\vdots \\
k^{d-1} & \dots & (k+i-1)^{d-1} & (k+i+1)^{d-1} & \cdots & (k+d)^{d-1} \!
\end{array}
\right).
\]
(When $i=0$,  the product before $\Dbf_0$ is $(k+1)\cdots (k+d)$ and the first column in $\Dbf_0$ is $(1, k+1,\ldots, (k+1)^{d-1})$, and so on.) First fix some $1 \leq i \leq d$.
Applying the Vandermonde identity again, 
\[
\mathbf{D}_i=\prod_{1 \leq j' < j \leq d} (w_j - w_{j'})
\]
where $w_j := k + j - 1$ if $j \leq i$, and $w_j := k + j$ if $j \geq i+1$. Now for $1 \leq j \leq d$, 
\[
\prod_{1 \leq j' < j} (w_j - w_{j'}) = 
\begin{cases}
 (j-1)! &\quad \text{if $1 \leq j \leq i$} \\
 \frac{j!}{j-i} &\quad \text{if $i < j \leq d$}.
 \end{cases}
\]
Multiply over all $1 \leq j \leq d$, and simplify, obtaining
\[ v_i=(-1)^i \binom{k+i-1}{i} \binom{k+d}{d-i} d! (d-1)! \cdots 1!.\]
Consequently, $d! (d-1)! \dots 1!$ divides $v_i$  and $c_i$ is given explicitly as
\[
c_i = (-1)^i \frac{k \cdots (k+i-1)}{i!} \frac{(k+i+1) \cdots (k+d)}{(d-i)!},
\]
which implies the desired lower bound on $|c_i|$. 

For $i=0$, the computation is similar.
To compute $\mathbf{D}_0$ we define
 $w_j := k + j$ for all $j \geq 1$, and then for each $1 \leq j \leq d$, 
\[
\prod_{1 \leq j' < j} (w_j - w_{j'}) = (j-1)!.
\]
So arguing as before, we see
\[ v_0= \binom{k+d}{d} d! (d-1)! \cdots 1!, \qquad 
    c_0 = \binom{k+d}{d} = \frac{(k+1) \cdots (k+d)}{d!}
.\]
This completes the proof of (\ref{v_divis}) and hence the lemma is proved.

\section{Cases when majorant properties hold}\label{sec_remarks}

\subsection{Strict majorant property holds when \texorpdfstring{$p \in 2\N$}{p is even}}\label{sec_2k}
Here we simply observe that the strict majorant property holds in great generality  when $p \in 2\N$. (See Bennett and Bez \cite{MR3050801} for the case $\phi(n) = (n,n^2).$)
\begin{lemma}\label{lem:majorant}
For every $\phi:\N^k\to \Z^d$ and every positive integer $s$,
\begin{equation}\label{3-majorant}  \sup_{|a_n|\le A_n} \Big\| \sum_{n\in [1,N]^k} a_n e(\phi(n) \cdot \alpha ) \Big\|_{L^{2s}([0,1]^d)}	\le  \Big\| \sum_{n\in [1,N]^k} A_n e(  \phi(n) \cdot \alpha) \Big\|_{L^{2s}([0,1]^d)}.
\end{equation}
\end{lemma}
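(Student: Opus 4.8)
The plan is to expand both sides as finite sums over \emph{additive tuples} of frequencies and then compare term by term. Write $f(\alpha) = \sum_{n\in[1,N]^k} a_n e(\phi(n)\cdot\alpha)$ and $g(\alpha) = \sum_{n\in[1,N]^k} A_n e(\phi(n)\cdot\alpha)$. Since the exponent $2s$ is an even integer, I would begin from the identity $\|f\|_{L^{2s}([0,1]^d)}^{2s} = \int_{[0,1]^d} f(\alpha)^s\, \overline{f(\alpha)}^{\,s}\, d\alpha$, multiply out $f(\alpha)^s$ and $\overline{f(\alpha)}^{\,s}$ into finite sums indexed by $s$-tuples $(n_1,\dots,n_s)$ and $(m_1,\dots,m_s)$ in $[1,N]^k$, and invoke the orthogonality relation $\int_{[0,1]^d} e(m\cdot\alpha)\,d\alpha = 1$ if $m = \zerobf \in \Z^d$ and $0$ otherwise.

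The first step then yields the exact formula
\[
\|f\|_{L^{2s}([0,1]^d)}^{2s} \;=\; \sum_{\substack{n_1,\dots,n_s,\, m_1,\dots,m_s\in[1,N]^k \\ \phi(n_1)+\cdots+\phi(n_s)\,=\,\phi(m_1)+\cdots+\phi(m_s)}} a_{n_1}\cdots a_{n_s}\,\overline{a_{m_1}}\cdots\overline{a_{m_s}},
\]
and all interchanges of summation and integration here are harmless because every sum is finite. Applying the same computation to $g$, and using that each $A_n$ is a nonnegative real (so $\overline{A_n}=A_n$), gives
\[
\|g\|_{L^{2s}([0,1]^d)}^{2s} \;=\; \sum_{\substack{n_1,\dots,n_s,\, m_1,\dots,m_s\in[1,N]^k \\ \phi(n_1)+\cdots+\phi(n_s)\,=\,\phi(m_1)+\cdots+\phi(m_s)}} A_{n_1}\cdots A_{n_s}\,A_{m_1}\cdots A_{m_s},
\]
where now \emph{every} summand is nonnegative.

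The second step is a termwise triangle inequality: for each admissible tuple,
\[
\bigl|a_{n_1}\cdots a_{n_s}\,\overline{a_{m_1}}\cdots\overline{a_{m_s}}\bigr| = |a_{n_1}|\cdots|a_{n_s}|\,|a_{m_1}|\cdots|a_{m_s}| \le A_{n_1}\cdots A_{n_s}\,A_{m_1}\cdots A_{m_s},
\]
using the hypothesis $|a_n|\le A_n$. Summing over the (finite) admissible index set gives $\|f\|_{L^{2s}}^{2s}\le \|g\|_{L^{2s}}^{2s}$, hence $\|f\|_{L^{2s}}\le\|g\|_{L^{2s}}$; taking the supremum over all coefficient sequences $(a_n)$ with $|a_n|\le A_n$ proves \eqref{3-majorant}.

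There is essentially no obstacle here: the only two points that genuinely matter are that $2s$ is an even integer, so that $|f|^{2s}=f^s\overline{f}^{\,s}$ expands into a sum of characters and picks up the additive constraint upon integration, and that the majorant coefficients $A_n$ are nonnegative, so that the expansion of $\|g\|_{L^{2s}}^{2s}$ has no cancellation and therefore coincides with the termwise upper bound produced by the triangle inequality. Both hold by hypothesis, so the argument is complete; in particular no smoothness or finiteness assumption on $\phi$ beyond $\phi:\N^k\to\Z^d$ is needed.
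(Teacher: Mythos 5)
Your proof is correct and follows essentially the same route as the paper: expand the $2s$-th power via orthogonality into a sum over tuples satisfying $\phi(n_1)+\cdots+\phi(n_s)=\phi(m_1)+\cdots+\phi(m_s)$, then bound the (nonnegative) left-hand side termwise by the triangle inequality and $|a_n|\le A_n$, recognizing the resulting majorizing sum as $\|g\|_{L^{2s}}^{2s}$. No gaps.
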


\begin{proof}
Expanding out, we have
\begin{equation}\label{3-majorantpf1}
\int_{[0,1]^d} \Big| \sum_{n\in [1,N]^k} a_n e(  \phi(n) \cdot \alpha)\Big|^{2s} d\alpha = \sideset{}{^*}\sum_{(x,y)} a_{x_1}\cdots a_{x_s} \overline{a_{y_1} \cdots a_{y_s}},
\end{equation}
where $\sum^*_{(x,y)}$ refers to summation only over those pairs $(x,y)\in ([1,N]^k)^s \times ([1,N]^k)^s$ of integral tuples that satisfy
\begin{equation}\label{3-majorantpf2}
\phi(x_1)+\cdots+\phi(x_s) = \phi(y_1)+\cdots+\phi(y_s).
\end{equation}
 Since $|a_n|\le A_n$, the non--negative number \eqref{3-majorantpf1} is bounded above by
\[ \sideset{}{^*}\sum_{(x,y)} A_{x_1}\cdots A_{x_s} A_{y_1} \cdots A_{y_s} = \int_{[0,1]^d} \Big| \sum_{n\in [1,N]^k} A_n e(  \phi(n) \cdot \alpha)\Big|^{2s} d\al.\]
\end{proof}

\subsection{Strict majorant property holds for affinely independent sets} 

\begin{prop}\label{prop_characterize}
Let $\Ga \subset \Z^d$ be non-empty and affinely independent. Then $\Ga$ satisfies the strict majorant property for every $p > 0$.
\end{prop}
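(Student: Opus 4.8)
The plan is to show that when $\Ga$ is affinely independent, the change-of-variables machinery of Lemma~\ref{lem_affine_invar} reduces the problem to the model case $\Ga = \{0, e_1, \dots, e_{d'}\}$ (the standard affine simplex) in dimension $d' = |\Ga| - 1$, where the strict majorant property can be checked by hand. First I would invoke Proposition~\ref{prop_affine_inv}: since $\Ga$ is affinely independent it has affine dimension $d' = |\Ga| - 1$, so we may write $\Ga = n_* + A\Ga'$ with $A \in \Z^{d \times d'}$ of rank $d'$ and $\Ga' \subset \Z^{d'}$ of affine dimension $d'$ and cardinality $d' + 1$; thus $\Ga'$ is itself affinely independent with $d'+1$ points. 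By Lemma~\ref{lem_affine_invar}, for any coefficients the two $L^p([0,1]^d)$ norms appearing in the strict majorant inequality for $\Ga$ equal the corresponding $L^p([0,1]^{d'})$ norms for $\Ga'$, so it suffices to prove the statement for an affinely independent set of $d'+1$ points in $\Z^{d'}$.

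Next, writing $\Ga' = \{m_0, m_1, \dots, m_{d'}\}$, I would translate by $-m_0$ (harmless since $|e(m_0\cdot x)| = 1$) to assume $m_0 = 0$, so that $m_1 - m_0, \dots, m_{d'} - m_0 = m_1, \dots, m_{d'}$ are linearly independent in $\Z^{d'}$, i.e. they form the columns of an integer matrix $M$ of rank $d'$, hence nonzero determinant. Applying Lemma~\ref{lemma_change_var} with $B = M^t$ (which is a $d' \times d'$ integer matrix with nonzero determinant) to the $1$-periodic function $F(x) = |\sum_{i} b_i e(m_i \cdot x)|^p$, the integral is unchanged under $x \mapsto (M^t)^{-1} x$-type reasoning; concretely, $\int_{[0,1]^{d'}} |b_0 + \sum_{i=1}^{d'} b_i e(m_i \cdot x)|^p \, dx = \int_{[0,1]^{d'}} |b_0 + \sum_{i=1}^{d'} b_i e(x_i)|^p \, dx$, since $m_i \cdot x = e_i \cdot (M^t x)$. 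So everything collapses to the model case $\Ga = \{0, e_1, \dots, e_{d'}\}$.

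For the model case, the key observation is that the variables $e(x_1), \dots, e(x_{d'})$ are ``independent'' in the sense that the exponentials $\{e(x_1), \dots, e(x_{d'}), 1\}$ generate, under multiplication, a set of characters $e(\sum k_i x_i)$ indexed injectively by $(k_1, \dots, k_{d'}) \in \Z^{d'}$. Concretely, when $p = 2s$ is an even integer this is exactly Lemma~\ref{lem:majorant}, but for general $p > 0$ I would argue directly: expand $|b_0 + \sum b_i e(x_i)|^p = (b_0 + \sum b_i e(x_i))^{p/2}\overline{(\cdots)}^{p/2}$ as in \eqref{Taylor} (valid for $|b| < 1$ by uniform convergence, then extended to all real coefficients by homogeneity/scaling and continuity of both sides in $(b_i)$), obtaining $\sum_{\beta,\gamma} \binom{p/2}{|\beta|}\binom{p/2}{|\gamma|}\binom{|\beta|}{\beta}\binom{|\gamma|}{\gamma} b^{\beta+\gamma} I(\beta - \gamma)$ where now $I(\beta - \gamma) = \int_{[0,1]^{d'}} e(\sum_{i=1}^{d'}(\beta_i - \gamma_i) x_i)\,dx = 1$ iff $\beta_i = \gamma_i$ for all $i = 1, \dots, d'$ (the index $0$ is free). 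Collecting terms over $\beta_0, \gamma_0 \geq 0$ with $\beta_i = \gamma_i$ for $i \geq 1$, the sum becomes $\sum_{\ell \geq 0} \left(\text{coefficient}\right) \cdot \left(\text{monomial in } b_1^2, \dots, b_{d'}^2 \text{ and } b_0\right)$; the only place $b_0$ enters with an odd power would require $\beta_0 \neq \gamma_0$, but then rearranging the double sum over $(\beta_0, \gamma_0)$ symmetrically shows the whole expression is a sum of terms each of which is a product of $b_i^2$ ($i \geq 1$) times a symmetric function of $b_0$ that is in fact a function of $b_0^2$ — more carefully, grouping $\beta_0, \gamma_0$ and using $I$ is insensitive to $\beta_0 - \gamma_0$, the expansion equals $\sum_{\beta,\gamma \,:\, \beta_i = \gamma_i \,\forall i \geq 1} (\cdots) b_0^{\beta_0 + \gamma_0} \prod_{i\geq1} b_i^{2\beta_i}$, so replacing each $b_i$ by $|b_i|$ changes $b_0^{\beta_0+\gamma_0}$ to $|b_0|^{\beta_0 + \gamma_0}$ and leaves $\prod_{i \ge 1} b_i^{2\beta_i}$ untouched. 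I would argue that this substitution can only \emph{increase} (weakly) the value: the cleanest route is actually to avoid sign-chasing in the general-$p$ expansion and instead note that for \emph{any} real $b_0$ and any $x \in [0,1]^{d'}$, $|b_0 + \sum_{i\ge1} b_i e(x_i)|$ and $||b_0| + \sum_{i\ge1}|b_i| e(x_i)|$ — hmm, this is false pointwise, so one does need the integral identity. I expect the main obstacle to be precisely this last step: showing that replacing $b_i \rightsquigarrow |b_i|$ does not decrease the integral in the model case. The right tool is the even-integer case (Lemma~\ref{lem:majorant}) as a template — one shows the Taylor coefficients of the model integral, viewed as a function of $(b_0^2, b_1, \dots, b_{d'})$ after the reduction above, in fact only involve even powers of \emph{each} $b_i$ including $b_0$ once the free index $\beta_0 - \gamma_0$ is correctly accounted for, by pairing the $(\beta_0,\gamma_0)$ and $(\gamma_0,\beta_0)$ contributions, which are equal since $I$ is symmetric and the binomial product is symmetric in $|\beta| \leftrightarrow |\gamma|$; hence the model integral depends only on $b_0^2, \dots, b_{d'}^2$, so the strict majorant inequality holds with \emph{equality}, and the reduction steps transfer this equality back to $\Ga$.
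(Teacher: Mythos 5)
Your reduction steps are sound and coincide with the paper's: using Proposition~\ref{prop_affine_inv} and Lemma~\ref{lem_affine_invar} (or, as the paper does, completing $\Ga$ to affine dimension $d$), then translating and applying Lemma~\ref{lemma_change_var}, one does indeed reduce to the model case $\{ \zerobf, e_1,\dots,e_{d'}\}$. The problem is that your treatment of the model case proves the wrong statement. The strict majorant property requires the inequality for \emph{all} majorants $A_n \ge |a_n|$, not just for $A_n = |a_n|$; your argument only aims at showing the integral $I(b_0,\dots,b_{d'}) = \int_{[0,1]^{d'}} |b_0 + b_1 e(x_1) + \dots + b_{d'} e(x_{d'})|^p\,dx$ is unchanged when each $b_i$ is replaced by $|b_i|$, and then you conclude ``equality.'' That sign-invariance is true, but it only settles the case of equal moduli. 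The essential missing ingredient is \emph{monotonicity}: that $I$, viewed as a function on $[0,\infty)^{d'+1}$, is non-decreasing in each variable. This is exactly what the paper supplies: after noting symmetry and the identity $I(a_0,\dots,a_d)=I(|a_0|,\dots,|a_d|)$, it reduces to the one-variable statement that $G(r)=\int_0^1 |a + r e(t)|^p\,dt$ is non-decreasing in $r\ge 0$, proved by a derivative computation for $r \ge 1$ and by non-negativity of the Taylor coefficients for $0<r<1$. Without some argument of this kind your proof does not yield Proposition~\ref{prop_characterize}.

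Two secondary points. First, the sign-invariance itself is obtained much more cleanly, and without convergence caveats, by the paper's phase-rotation/periodicity argument (multiply the integrand by a unimodular constant and shift each $x_i$); your route through the expansion \eqref{Taylor} is shaky, both because that expansion presupposes a unit zero-frequency coefficient (so it does not apply to general $b_0$, and homogeneity alone does not fix this), and because pairing the $(\beta_0,\gamma_0)$ and $(\gamma_0,\beta_0)$ contributions produces the \emph{same} monomial $b_0^{\beta_0+\gamma_0}$, so it does not by itself eliminate odd powers of $b_0$. Second, your own parenthetical doubt is well placed: no pointwise comparison is available, so any correct proof must go through an integral monotonicity statement such as the paper's final lemma.
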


\begin{proof}
   Without loss of generality we may assume that $\Ga$ has affine dimension $d$ (since otherwise we could complete $\Ga$ to an affine dimension $d$ set). 
 Then the cardinality of $\Ga$ is $d+1$, and by translation invariance  of the proposed inequality (\ref{smp}) with respect to $\Ga$, we may assume $\Ga = \{\zerobf, n_1, \dots, n_d\}$ where $n_1, \dots, n_d \in \Z^d$ are linearly independent. 
Thus one can form an invertible $d\times d$ matrix $A$ with integer entries 
so that $n_j = A e_j$ for every $1 \leq j \leq d$; here $e_j$ is the $j$-th coordinate vector in $\Z^d$. Thus we may apply Lemma~\ref{lemma_change_var} and assume that $\Ga = \{\zerobf, e_1, \dots, e_d\}$. It suffices to show that whenever $p > 0$,  $a_0, a_1, \dots, a_d \in \mathbb{C}$ and $A_0,A_1,\dots,A_d \in \R$ with $|a_j| \leq A_j$ for all $j$, we have
\[
\int_{[0,1]^d} \! |a_0+a_1 e(x_1) + \dots + a_d e(x_d)|^p dx \leq \! \int_{[0,1]^d} \! |A_0+A_1 e(x_1)+ \dots + A_d e(x_d)|^p dx.
\]
Let 
\[
I(a_0,\dots,a_d) := \int_{[0,1]^d} |a_0+a_1 e(x_1) + \dots +a_d e(x_d)|^p dx.
\]
It is easy to see that $I$ is a symmetric in $a_0,\dots,a_d$: if $a_0',\dots,a_d'$ is a permutation of $a_0,\dots,a_d$, then 
$
I(a_0,\dots,a_d) = I(a_0',\dots,a_d').
$
(For instance, if $d = 2$, then $I(a_0,a_1,a_2) = \int_{[0,1]^2} |a_0 e(-x_1) + a_1 + a_2 e(x_2-x_1)|^p dx = I(a_1,a_0,a_2)$.) Furthermore, 
$
I(a_0,\dots,a_d) = I(|a_0|,\dots,|a_d|)
$
for every $a_0,\dots,a_d \in \mathbb{C}$, by periodicity of the integrand defining $I$. (For example, if $a_0 e(\al_0) \in \R$, we first multiply the integrand by $|e(\al_0)|^p$, and then set $x_1 \mapsto x_1+\al_1$ for $\al_1 \in [0,1]$ such that $a_1 e(\al_0+\al_1) \in \R$, and so on.)
Thus we can think of $I(a_0,\dots,a_d)$ as a function defined on $[0,\infty)^{d+1}$. Taking both the above properties into account, it suffices to show that this function is non-decreasing in $a_d \in [0,\infty)$ once $a_0, \dots, a_{d-1}$ are fixed. This follows from applying the lemma below   with $a = a_0 + a_1 e(x_1) + \dots + a_{d-1} e(x_{d-1})$, $b = a_d$ and $B = A_d$.
\end{proof}

\begin{lemma}
Suppose $p > 0$, $a \in \mathbb{C}$,  and $0 \leq b \leq B$. Then 
\[
\int_0^1 |a + b e(t)|^p dt \leq \int_0^1 |a + B e(t)|^p dt.
\]
\end{lemma}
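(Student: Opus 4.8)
The plan is to prove the one-variable monotonicity statement
\[
\int_0^1 |a + b e(t)|^p \, dt \le \int_0^1 |a + B e(t)|^p \, dt \qquad (0 \le b \le B,\ a \in \mathbb{C})
\]
by reducing to the case $a \ge 0$ and then showing the integral is nondecreasing in $b$. First I would observe that, by the rotation $t \mapsto t + \theta$ and multiplying inside by a unimodular constant, we may assume $a \ge 0$ is a nonnegative real; since $e(\cdot)$ is $1$-periodic this does not change the value of the integral. It then suffices to show that, for fixed $a \ge 0$, the function $\varphi(b) := \int_0^1 |a + b e(t)|^p \, dt$ is nondecreasing on $[0,\infty)$.

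The cleanest route is to write $|a + b e(t)|^2 = a^2 + b^2 + 2ab \cos(2\pi t)$ and consider $\varphi(b) = \int_0^1 (a^2 + b^2 + 2ab\cos(2\pi t))^{p/2}\, dt$. Differentiating under the integral sign (justified for $b$ in a neighborhood where $a + be(t)$ does not vanish identically; one handles the possible zero of the integrand at an isolated $t$ when $b = a$ by a standard dominated-convergence/absolute-continuity argument, using $p > 0$ so the singularity is integrable) gives
\[
\varphi'(b) = \frac{p}{2}\int_0^1 (a^2 + b^2 + 2ab\cos(2\pi t))^{p/2 - 1}\,(2b + 2a\cos(2\pi t))\, dt.
\]
So it suffices to show this integral is nonnegative. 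I would exploit the symmetry $t \mapsto 1 - t$ (equivalently $\cos(2\pi t) \mapsto \cos(2\pi t)$, unchanged) — actually the useful pairing is to split $[0,1]$ according to the sign of $\cos(2\pi t)$, or better, to use the substitution that sends $\cos(2\pi t) \mapsto -\cos(2\pi t)$, i.e. $t \mapsto t + 1/2$. Pairing $t$ with $t + 1/2$ and writing $u = a^2 + b^2$, $w = 2ab\cos(2\pi t) \ge 0$ on the relevant half, the integrand contributes
\[
(u + w)^{p/2-1}(2b\cdot\tfrac{w}{2ab} + \cdots)
\]
— more precisely one gets a sum of the form $(u+w)^{p/2-1}(2b + a\cdot\tfrac{w}{ab}) + (u-w)^{p/2-1}(2b - a\cdot\tfrac{w}{ab})$ type expression, and nonnegativity reduces to checking that $f(w) := (u+w)^{p/2-1}$ and the associated linear weight cooperate, which is an elementary inequality in one real variable $w \in [0, 2ab]$.

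Alternatively — and this may be the cleanest to write up — I would avoid differentiation entirely and argue by a direct comparison: by the symmetry $t \mapsto t + 1/2$,
\[
\varphi(b) = \frac12 \int_0^1 \Big( |a + b e(t)|^p + |a - b e(t)|^p \Big)\, dt,
\]
and for fixed $t$ the function $b \mapsto |a + be(t)|^p + |a - be(t)|^p = (a^2 + b^2 + 2ab c)^{p/2} + (a^2 + b^2 - 2ab c)^{p/2}$ (where $c = \cos 2\pi t \in [-1,1]$) is nondecreasing in $b \ge 0$: writing $r = a^2 + b^2$ and $s = 2ab|c|$, with $0 \le s \le r$, the map $b \mapsto (r+s)^{p/2} + (r-s)^{p/2}$ increases because both $r$ increases and (after one checks the mixed effect) the convexity/concavity of $x \mapsto x^{p/2}$ can be handled; concretely, $\frac{d}{db}\big[(r+s)^{p/2}+(r-s)^{p/2}\big] = \frac{p}{2}\big[(r+s)^{p/2-1}(r'+s') + (r-s)^{p/2-1}(r'-s')\big]$ with $r' = 2b$, $s' = 2a|c| \le 2b$, so $r' \pm s' \ge 0$ and the bracket is a sum of two nonnegative terms. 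The main obstacle is purely technical: justifying the interchange of $\tfrac{d}{db}$ and $\int$ near $b = a$, where the integrand $|a+be(t)|^p$ has a zero (and its $b$-derivative may blow up if $p < 1$). This is resolved by noting $\varphi$ is continuous on $[0,\infty)$ (dominated convergence, using $p>0$), is $C^1$ on $(0,\infty)\setminus\{a\}$ with $\varphi' \ge 0$ there, and hence is nondecreasing on all of $[0,\infty)$ — no derivative at $b=a$ is needed. Once the lemma is established, it feeds directly into Proposition~\ref{prop_characterize} as indicated, completing the characterization.
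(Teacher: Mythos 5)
Your reduction to $a \ge 0$ and the pairing $t \mapsto t+\tfrac12$ are fine, but the key sign claim in your ``cleanest'' argument is false: you assert $s' = 2a|c| \le 2b = r'$, which holds only when $b \ge a|c|$; in the regime $b < a$ it fails, and the bracket is then not a sum of two nonnegative terms. Worse, the pointwise statement you want --- that $b \mapsto |a+be(t)|^p + |a-be(t)|^p$ is nondecreasing for each fixed $t$ --- is genuinely false when $0 < p < 1$: taking $a=1$ and $t=0$ (paired with $t=\tfrac12$), the paired integrand is $(1+b)^p + (1-b)^p$, which is \emph{strictly decreasing} on $(0,1)$ for $p<1$, and the failure persists for nearby $t$ (e.g.\ $p=\tfrac12$, $b=0.1$, $\cos(2\pi t)=0.9$ gives a negative $b$-derivative of the paired sum). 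For $p \ge 1$ your pointwise claim can be rescued (convexity of $z \mapsto |a+z|^p+|a-z|^p$ together with the symmetry $z \mapsto -z$ gives monotonicity along rays), but the lemma is needed for all $p>0$. Your first sketch --- differentiating under the integral and invoking ``an elementary inequality in the variable $w$'' --- reduces to exactly the same pointwise inequality after pairing, so it does not repair the case $p<1$, $b<a$: there $\varphi'(b)\ge 0$ is only true after integration in $t$, and some further idea is required.

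For comparison, the paper normalizes $a=1$ and studies $G(r)=\int_0^1 |1+re(t)|^p\,dt$. For $r\ge 1$ your derivative formula already gives $G'(r)\ge 0$ with no pairing, since $r+\cos(2\pi t)\ge 0$ pointwise. For $0<r<1$ --- precisely the range where your argument breaks --- the paper avoids the sign analysis of $G'$ altogether: expanding as in \eqref{Taylor}, only the diagonal terms survive the $t$-integration, so $G(r)=\sum_{j\ge 0}\binom{p/2}{j}^2 r^{2j}$ has nonnegative coefficients and is manifestly nondecreasing on $[0,1)$; continuity of $G$ at $r=1$ then glues the two ranges. To salvage your write-up you would need to replace the pointwise pairing in the regime $b<a$ by an argument of this kind (or restrict the convexity argument to $p\ge 1$ and treat $0<p<1$ separately).
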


\begin{proof}
The assertion is clear when $a = 0$, and if $a \ne 0$ we can factor out $|a|^p$ from both sides, so without loss of generality we may assume $a = 1$. Then we need to show that the function
$
G(r) = \int_0^1 |1 + re(t)|^p dt
$
is a non-decreasing function of $r \in [0,\infty)$ for all $p>0$.
We compute that
\[G'(r) = p \int_0^1 (1+r^2+2r\cos(2\pi t))^{p/2-1}(r+\cos(2\pi t))\, dt.\]
For $r\ge 1$ it is evident that $G'(r)\ge 0$. 
For $r\in (0,1)$, $G(r)$ is represented by its Taylor series at $r=0$ as in \eqref{Taylor}, which has only non-negative coefficients. Invoking continuity of $G$ at $r=1$ finishes the proof that $G$ is non-decreasing.
\end{proof}

\subsection{A (weaker) majorant property holds for the moment curve}

Let $\ga(t) = (t,t^2, \ldots, t^d)$ parametrize the moment curve in $\R^d$, for $d \geq 2$.
Following the argument of
\cite[Thm. 1.2]{MR3050801} for $d=2$, we prove (\ref{weak_constant}).
The key step is to show that for any sequence of real coefficients $b=\{b_n\},$ for any integer $1 \leq r \leq d,$
\beq\label{2d2}
 \| b\|_{\ell^2(\Z)} \leq \Big\| \sum_{n \in \Z} b_ne(\ga(n) \cdot x) \Big\|_{L^{2r}([0,1]^d)}
     \leq c_r \| b\|_{\ell^2(\Z)}.
\eeq
    In fact we can take $c_r=(r!)^{1/2r}.$
Once (\ref{2d2}) is known, applying it for the choices $r=d$ and $r=1,$ along with two applications of H\"older's inequality, and the assumption on $|a_n| \leq A_n$, shows that   for any $2 \leq p \leq 2d,$
\begin{multline*}
\Big\| \sum_{n \in \Z} a_ne(\ga(n) \cdot x) \Big\|_{L^{p}([0,1]^d)}
    \leq \Big\| \sum_{n \in \Z} a_ne(\ga(n) \cdot x) \Big\|_{L^{2d}([0,1]^d)}
     \leq c_d \| a\|_{\ell^2(\Z)}\\
     \leq c_d\| A\|_{\ell^2(\Z)} \leq c_d \Big\| \sum_{n \in \Z} A_ne(\ga(n) \cdot x) \Big\|_{L^{2}([0,1]^d)}
    \leq c_d \Big\| \sum_{n \in \Z} A_ne(\ga(n) \cdot x) \Big\|_{L^{p}([0,1]^d)}.
\end{multline*}

To prove (\ref{2d2}), fix an integer $1 \leq r \leq d$ and expand the $2r$-th power of the central expression. It is equal to 
\[ \sideset{}{^*}\sum_{\substack{(n_1, \ldots, n_r) \in \Z^r\\(m_1,\ldots,m_r) \in \Z^r}} b_{n_1}\cdots b_{n_r} \overline{b_{m_1}} \cdots \overline{b_{m_r}},\]
in which the restricted summation is over tuples of $n_i, m_i \in \Z$ that satisfy the Vinogradov system of $d$ simultaneous equations in $2r$ variables,
\[ n_1^j + \cdots + n_r^j = m_1^j + \cdots + m_r^j, \qquad 1 \leq j \leq d.\]
If  $r \leq d$, the only integral solutions to this are diagonal, that is,  the tuple $(n_1,\ldots,n_r)$ is a permutation of $(m_1,\ldots,m_r)$ (see e.g. \cite[Lemma 2.1]{GGPRY}).
Thus upon letting $N(n_1,\ldots,n_r)$ denote the number of tuples $(m_1,\ldots, m_r)$ that are permutations of $(n_1,\ldots, n_r)$,
\[\Big\| \sum_{n \in \Z} b_ne(\ga(n) \cdot x) \Big\|_{L^{2r}([0,1]^d)}^{2r}
     = \sum_{(n_1,\ldots, n_r) \in \Z^r} N(n_1,\ldots, n_r) |b_{n_1}|^2 \cdots |b_{n_r}|^2.
     \]
     This is bounded above by  $r! \| b\|_{\ell^2(\Z)}^{2r}$ and below by $\| b\|_{\ell^2(\Z)}^{2r}$, verifying (\ref{2d2}).

\subsection*{Acknowledgements}
We thank AIM for funding our SQuaRE workshop. Gressman was partially supported by NSF  DMS-1764143; Guo   by 
NSF DMS-1800274; 
Pierce  by NSF CAREER DMS-1652173, a Sloan Research Fellowship, and an AMS Joan and Joseph Birman Fellowship; Yung   by a Future Fellowship FT200100399 from the Australian Research Council.
 %\bibliographystyle{alpha}
 %\bibliography{SquareBibliography}	

\newcommand{\etalchar}[1]{$^{#1}$}

\end{document}